\definecolor{red}{rgb}{0.7,0,0}
\numberwithin{equation}{section}
\newtheorem{theorem}{Theorem}[section]
\newtheorem{lemma}[theorem]{Lemma}
\newtheorem{proposition}[theorem]{Proposition}
\theoremstyle{definition}
\newtheorem{definition}[theorem]{Definition}
\theoremstyle{remark}
\begin{document}

\title{Compactness of multilinear commutators generated by VMO functions and fractional integral operator on Morrey spaces}

\author{Daiki Takesako\footnote{Address: Department of Mathematics Graduate School of Science and Engineering, Chuo University, 1-13-27, Bunkyo-ku, 112-8551, Tokyo, Japan. e-mail: takesako.math@gmail.com}}

%\affil[]{\orgdiv{Department of Mathematics}, \orgname{Graduate School of Science and Engineering, Chuo University}, \orgaddress{\street{1-13-27}, \city{Bunkyo-ku}, \postcode{112-8551}, \state{Tokyo}, \country{Japan}}}

\date{}

\maketitle

\begin{abstract}
The aim of this paper is to improve the compactness of the multilinear commutators in Morrey spaces generated by VMO functions and fractional integral operators. In this paper, we will use the decomposition of the tilde closed subspaces of Morrey spaces. This gives us more understanding about commutators.
\end{abstract}

{\bf keywords} Morrey space, compact operator, fractional integral operator, multilinear commutator, tilde closed subspace.

{\bf 2020 Classification} 42B35, 42B20, 46B50

\section{Introduction}
The aim of this paper is to refine the known results of the multilinear commutators in Morrey spaces generated by VMO functions and the fractional integral operator $I_{\alpha}$. First, we recall the definition of Morrey space ${\mathcal M}^{p}_q({\mathbb R}^n)$ and the fractional integral operator $I_{\alpha}$.
\begin{definition}
	We denote the open ball whose center is $x$ with radius $r>0$ by $B(x,r)$. Let $1\le q \le p < \infty$. For an $L^{q}_{\rm loc}({\mathbb R}^n)$-function $f$, its Morrey norm $\| \cdot \|_{{\mathcal M}^{p}_q}$ is defined by
	\begin{equation*}
		\| f \|_{{\mathcal M}^{p}_q({\mathbb R}^n)}
		:=
		\sup\limits_{(x,r) \in {\mathbb R}^{n} \times \mathbb{R}_+}
		|B(x,r)|^{\frac{1}{p}-\frac{1}{q}}
		\left(
		\int_{B(x,r)}|f(y)|^{q}{\rm d}y
		\right)^{\frac{1}{q}}.
	\end{equation*}
	The Morrey space ${\mathcal M}^{p}_q({\mathbb R}^n)$ is the set of $L^{q}_{\rm loc}({\mathbb R}^n)$-functions with this norm finite.
\end{definition}

Note that, for $p=q$, we have $\mathcal{M}^p_p(\mathbb{R}^n)$ is the Lebesgue space $L^p(\mathbb{R}^n)$. By H\"{o}lder's inequality, we get for $1 \leq q_1 \leq q_2 \leq p < \infty$, $\| f \|_{{\mathcal M}^{p}_{q_1}} \leq \| f \|_{{\mathcal M}^{p}_{q_2}} \leq \| f \|_{{\mathcal M}^{p}_p} = \| f \|_{L^p}$.\\
%For example, for $p \ne q$, $f(x)=|x|^{-\frac{n}{p}}$ is an element of ${\mathcal M}^{p}_q({\mathbb R}^n)$, but $f \notin L^p({\R}^n)$.

\begin{definition}
	For a measurable function $f$ and $0 < \alpha < n$, we write $I_{\alpha} $ as the fractional integral operator of order $\alpha$ defined by
	\begin{align*}
		I_{\alpha}f(x):=\int_{{\mathbb R}^n} \frac{f(y)}{|x-y|^{n-\alpha}}dy,
	\end{align*}
	as long as the right-hand side makes sense.
\end{definition}

%We will consider the multilinear commutators formed as $[a_{\{1,...,l\}},I_{\alpha }]$ that are nests of commutators: $[a_{\{1,...,l\}},I_{\alpha}]=:[a_1,[a_{\{2,...,l\}},I_{\alpha}]]$. Once more, $[a_{\{1\}},I_{\alpha}]$ is the commutator generated by a multiplication operator by a function $a_1$ and the fractional integral operator, where $a_j$ are BMO-functions defined as following.

We define the commutator generated by a multiplication operator by a function $a_1$ and the fractional integral operator as $[a_{\{1\}},I_{\alpha}]f:=a_1I_{\alpha}(f)-I_{\alpha}(a_1f)$. We will consider the multilinear commutators formed as $[a_{\{1,...,l\}},I_{\alpha }]$ that are nests of commutators: $[a_{\{1,...,l\}},I_{\alpha}]:=[a_1,[a_{\{2,...,l\}},I_{\alpha}]]$. We assume the   $a_j$'s are BMO-functions defined as follows:

%$[a_{\{1,...,l\}},I_{\alpha}]=:[a_1,[a_{\{2,...,l\}},I_{\alpha}]]$. Once more, $[a_{\{1\}},I_{\alpha}]$ is the commutator generated by a multiplication operator by a function $a_1$ and the fractional integral operator, where $a_j$ are BMO-functions defined as following.

\begin{definition}
We say an $L_{loc}^1({\mathbb R}^n)$-function $b$ is in ${\rm BMO}({\mathbb R}^n)$ when
	\[
		\|b\|_* 
		:=
		\sup\limits_{(x,r) \in {\mathbb R}^{n} \times \mathbb{R}_+}
		\frac{1}{|B(x,r)|}\int_{B(x,r)}|b(y)-m_{B(x,r)}(b)|{\rm d}y<\infty.
	\]
where $m_{B(x,r)}(b)$ denotes the average of $b$ over $B(x,r)$.
\end{definition}

As a consequence of the definitions, we can write $[{a_{\{1,...,l\}}},I_{\alpha}]$ as follows:

\begin{definition}
	Let $l \in \mathbb N$, $a_j \in {\rm BMO}({\mathbb R}^n)$, $j=1,...,l$, $0<\alpha<n$, $1 < q \le p < \infty$ and $f \in {\mathcal M}^{p}_q({\mathbb R}^n)$. Define the multilinear commutator $[a_{\{1, ... ,l\}},I_{\alpha}]$ by 
	\[
		[a_{\{1, ... ,l\}},I_{\alpha}]f(x):=\int_{{\mathbb R}^n}\prod_{j=1}^l(a_j(x)-a_j(y))\frac{f(y)}{|x-y|^{n-\alpha}}dy.
	\]
\end{definition}

Additional conditions of the main theorem are $a_1 \in {\rm VMO}({\mathbb R}^n)$, and the target space is the {\it tilde closed subspace} $\widetilde{\mathcal M}^{s}_t({\mathbb R}^n)$ defined below.

\begin{definition}
	We set $C_c^{\infty}({\mathbb R}^n)$ as the set of $C^{\infty}({\mathbb R}^n)$-functions with compact support. The space ${\rm VMO}({\mathbb R}^n)$ is the closure of $C_c^{\infty}({\mathbb R}^n)$ under the norm of ${\rm BMO}({\mathbb R}^n)$. 
	Also, The subspace $\widetilde{\mathcal M}^{p}_q({\mathbb R}^n)$ is the closure of  $C_c^{\infty}({\mathbb R}^n)$ under the norm of $ {\mathcal M}^{p}_q({\mathbb R}^n)$.
\end{definition}

We will establish the following theorem.
\begin{theorem}\label{main}
	Suppose $1 < q \le p < \infty$, $l \in \mathbb{N}$, and $a_1 \in {\rm VMO}({\mathbb R}^n)$. Take $s, t$ so that
		\[
			\frac{p}{q}=\frac{s}{t} \quad {\rm and} \quad \frac{1}{s}=\frac{1}{p}-\frac{\alpha}{n} > 0.
		\]
	Then the operator $[a_{\{1, ... ,l\}},I_{\alpha}]$ is compact from ${\mathcal M}^{p}_q({\mathbb R}^n)$ to 
	$\widetilde{\mathcal M}^{s}_t({\mathbb R}^n)$.
\end{theorem}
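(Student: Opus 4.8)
The plan is to combine the known boundedness of the commutator with an approximation argument built on the definition of ${\rm VMO}({\mathbb R}^n)$ as the ${\rm BMO}$-closure of $C_c^{\infty}({\mathbb R}^n)$, thereby reducing the problem to the case where the first symbol $a_1$ is smooth and compactly supported.

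First I would record the boundedness estimate: for $a_j \in {\rm BMO}({\mathbb R}^n)$ the operator $[a_{\{1,...,l\}},I_{\alpha}]$ maps ${\mathcal M}^{p}_q({\mathbb R}^n)$ into ${\mathcal M}^{s}_t({\mathbb R}^n)$ with operator norm bounded by $C\prod_{j=1}^{l}\|a_j\|_*$. The essential feature is that the dependence on the first slot is linear, so that the operator norm of $[a_1-a_1^{(k)},[a_{\{2,...,l\}},I_{\alpha}]]$ is controlled by $\|a_1-a_1^{(k)}\|_*\prod_{j=2}^{l}\|a_j\|_*$. Since $a_1\in{\rm VMO}({\mathbb R}^n)$, there exist $a_1^{(k)}\in C_c^{\infty}({\mathbb R}^n)$ with $\|a_1-a_1^{(k)}\|_*\to 0$, and by multilinearity
\[
[a_{\{1,...,l\}},I_{\alpha}]-[a_1^{(k)},[a_{\{2,...,l\}},I_{\alpha}]]
=[a_1-a_1^{(k)},[a_{\{2,...,l\}},I_{\alpha}]] ,
\]
whose operator norm tends to $0$ by the estimate above. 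Thus $[a_{\{1,...,l\}},I_{\alpha}]$ is the operator-norm limit of the operators $[a_1^{(k)},[a_{\{2,...,l\}},I_{\alpha}]]$, and because the compact operators form a closed subspace of the bounded operators, it suffices to prove compactness under the additional assumption $a_1\in C_c^{\infty}({\mathbb R}^n)$.

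Second, for $a_1\in C_c^{\infty}({\mathbb R}^n)$ I would invoke the characterization of relatively compact subsets of the tilde closed subspace $\widetilde{\mathcal M}^{s}_t({\mathbb R}^n)$, using the scale-by-scale decomposition of its Morrey norm. Writing $g=[a_{\{1,...,l\}},I_{\alpha}]f$ for $\|f\|_{{\mathcal M}^{p}_q}\le 1$, I would verify: (i) a uniform Morrey-norm bound, immediate from boundedness; (ii) uniform smallness of the Morrey norm of $g$ on balls of small radius and of large radius, together with uniform decay as the center tends to infinity; and (iii) equicontinuity, $\sup_{\|f\|_{{\mathcal M}^{p}_q}\le1}\|g(\cdot-h)-g\|_{{\mathcal M}^{s}_t}\to 0$ as $h\to 0$. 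The non-uniform form of (ii) for a single $g$ already certifies $g\in\widetilde{\mathcal M}^{s}_t({\mathbb R}^n)$, so the operator does map into the stated target space, while the uniform forms yield relative compactness of the image of the unit ball. The mean value bound $|a_1(x)-a_1(y)|\le\|\nabla a_1\|_\infty|x-y|$ supplies an extra factor $|x-y|$ near the diagonal, effectively raising the kernel from order $\alpha$ to order $\alpha+1$ at small scales and thereby yielding (iii) and the small-radius part of (ii); the compact support of $a_1$ localizes $g$ near ${\rm supp}\,a_1$ and yields the large-radius and spatial-decay parts. To make this precise I would truncate the kernel to the annulus $\varepsilon\le|x-y|\le R$, establish compactness of the truncated commutator, and bound the two remaining pieces in operator norm by quantities tending to $0$ as $\varepsilon\to0$ and $R\to\infty$.

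The hard part is this last step, specifically controlling the factors $\prod_{j=2}^{l}(a_j(x)-a_j(y))$ attached to the merely-${\rm BMO}$ symbols while verifying the uniform equicontinuity and decay conditions. These factors carry no pointwise regularity and must be handled solely through their ${\rm BMO}$ oscillation and the boundedness of the inner commutator $[a_{\{2,...,l\}},I_{\alpha}]$; the delicate point is that only the single smooth factor $a_1$ together with the kernel truncation are available to force equicontinuity of the entire multilinear expression, with errors small uniformly over the unit ball of ${\mathcal M}^{p}_q({\mathbb R}^n)$. Because the compactness criterion in a Morrey space is considerably more intricate than in a Lebesgue space---oscillation must be controlled simultaneously across all scales---this is precisely where the decomposition of $\widetilde{\mathcal M}^{s}_t({\mathbb R}^n)$ into scale-localized pieces does the essential work.
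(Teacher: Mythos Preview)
Your approximation step (reducing to $a_1\in C_c^\infty$ via operator-norm closure) matches the paper's Lemma~\ref{apr} exactly. After that, however, the paper takes a much shorter route than the direct Fr\'echet--Kolmogorov verification you propose. The key observation you miss is that compactness of $[a_{\{1,\dots,l\}},I_\alpha]:\mathcal M^p_q\to\mathcal M^s_t$ is \emph{already known} from Sawano--Shirai, and since $\widetilde{\mathcal M}^s_t$ is a closed subspace of $\mathcal M^s_t$ carrying the same norm, it suffices to show that the range lies in $\widetilde{\mathcal M}^s_t$---a pointwise membership question for each fixed $f$, requiring no uniformity, no equicontinuity, and no kernel truncation. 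For that membership the paper uses the decomposition $\widetilde{\mathcal M}^s_t=\overline{\mathcal M}^s_t\cap\overset{*}{\mathcal M}^s_t$ (Proposition~\ref{decom}): the star-part follows from a dyadic decay estimate exploiting the compact support of $a_1$ and Lemma~\ref{BMO dyadic} to control the BMO factors; the bar-part is obtained not by checking the level-set condition of Proposition~\ref{char} but by showing the image lies piecewise in a \emph{higher} Morrey space $\mathcal M^a_b$ with $a>s$, whence Proposition~\ref{barsubsp} gives $\mathcal M^s_t\cap\mathcal M^a_b\subset\overline{\mathcal M}^s_t$. This higher-integrability trick is the real content of the paper and is what the abstract means by ``decomposition of the tilde closed subspaces.''

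Your plan is not wrong in principle, but it is considerably harder: you are effectively re-deriving the Sawano--Shirai result as a sub-step while simultaneously proving range membership, and you rely on a Morrey-space compactness criterion whose precise form you leave vague (your condition~(ii) does not correspond to the characterization of $\widetilde{\mathcal M}^s_t$ in Proposition~\ref{char}). The ``hard part'' you yourself flag---uniform equicontinuity in the presence of the merely-BMO symbols $a_2,\dots,a_l$---is exactly what the paper sidesteps by separating the already-known compactness into $\mathcal M^s_t$ from the new range statement.
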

This theorem gives us a possibility of approximation of multilinear commutators. Thus we learn that these commutators differ from other compact ${\mathcal M}^{p}_q({\mathbb R}^n)$-${\mathcal M}^{s}_t({\mathbb R}^n)$ operators.

The study of compactness for commutators on Morrey and Morrey-type spaces has become an important theme in modern harmonic analysis. A significant early contribution was made by Sawano and Shirai~\cite{mlc-cpt}, who demonstrated that compact commutator estimates remain valid even in settings with non-doubling measures. Their work highlighted the robustness of Morrey-space techniques beyond classical Euclidean frameworks and provided a strong foundation for subsequent developments.\\
Motivated by these ideas, Chen, Ding and Wang~\cite{CDW09} established compactness criteria for commutators of Riesz potentials on classical Morrey spaces. They showed that boundedness of such commutators is governed by BMO-type regularity of the symbol, whereas compactness requires a finer VMO-type condition. This distinction has since been recognized as a central principle underlying compactness phenomena in Morrey-space analysis.\\
Generalized Morrey spaces, which allow spatially dependent growth functions, were studied in~\cite{CDW10}, where compactness of commutators of Littlewood--Paley operators were obtained. This work extended the classical theory to a broader and more flexible functional setting. Parallel progress occurred for analytic singular integrals: Tao, Yang and Yang~\cite{TYY19} established compactness results for commutators of the Cauchy integral, while Dao, Duong and Ha~\cite{DDH21} investigated Cauchy--Fantappi\`e type integrals on complex ellipsoids, demonstrating that Morrey-type compactness naturally extends to complex-analytic and geometric contexts.\\
Weighted Morrey spaces form another important direction of research. Liu and Li~\cite{JL} studied commutators of Calderón--Zygmund operators in the weighted setting and obtained corresponding compactness characterizations. Likewise, Gong, Vempati, Wu and Xie~\cite{GVW22} obtained boundedness and compactness results for Cauchy-type integrals on weighted Morrey spaces. These works clarified the role of weighted VMO-type conditions in controlling compactness under non-uniform density conditions.\\
A unifying and structural perspective was later provided by Bokayev, Burenkov, Matin and Adilkhanov~\cite{B24}, who developed general compactness principles for global Morrey-type spaces. Their approach not only yielded new results for Riesz potentials but also provided a framework integrating several earlier contributions into a single conceptual viewpoint.

\section{Preliminaries}
In this paper, we use the symbol ``$A \lesssim B$'' when there exists a constant $C > 0$ independent of $A,B$ which satisfies $A \le B$. We write $A \sim B$ when $A \lesssim B$ and $B \lesssim A$ hold true. If the constant $C$ depends on $D$, we write $A \lesssim_D B$. Also, remark that we do not write $p'$ as $\frac{1}{p}+\frac{1}{p'}=1$ unless otherwise stated.

\subsection{Characterization and the decomposition of closed subspace of Morrey spaces} 
In this subsection, we will see the definitions and properties of the closed subspaces of Morrey spaces.
We write $L^0_c({\mathbb{R}}^n)$ for the set of measurable functions with compact support. 

\begin{definition}{(closed subspaces of Morrey spaces)}
Let $1 \le q \le p < \infty$.
	\begin{enumerate}
			\item[$(1)$]
				The {\it tilde-closed subspace}
				$\widetilde{\mathcal M}^{p}_q({\mathbb R}^n)$
				is defined to be the closure
				of $C^\infty_{\rm c}({\mathbb R}^n)$
				under the norm in ${\mathcal M}^{p}_q({\mathbb R}^n)$.

			\item[$(2)$]
				The {\it bar subspace}
				$\overline{\mathcal M}{}^p_q({\mathbb R}^n)$
				is defined to be the closure of
				$L^{\infty}({\mathbb R}^n) \cap {\mathcal M}^p_q({\mathbb R}^n)$
				under the norm in
				${\mathcal M}^p_q({\mathbb R}^n)$.
				\index{$\overline{\mathcal M}{}^p_q({\mathbb R}^n)$}

			\item[$(3)$]
				The {\it star subspace} $\overset{*}{\mathcal M}{}^p_q({\mathbb R}^n)$  is defined to be the closure of $L^0_{\rm c}({\mathbb R}^n) \cap {\mathcal M}^p_q({\mathbb R}^n)$ under the norm in ${\mathcal M}^p_q({\mathbb R}^n)$.
		\end{enumerate}
\end{definition}

In \cite{MS1}, we have the characterization of the {\it tilde-closed subspace} $\widetilde{\mathcal M}^{p}_q({\mathbb R}^n)$, the {\it bar subspace} $\overline{\mathcal M}{}^p_q({\mathbb R}^n)$, and the {\it star subspace} $\overset{*}{\mathcal M}{}^p_q({\mathbb R}^n)$.

\begin{proposition}{\rm (\cite[Proposition 331,332,333]{MS1})}\label{char}
	Let $1\le q\le p<\infty$. Then,
		\[
			\widetilde{\mathcal M}^{p}_q({\mathbb R}^n)
			=\{f \in {\mathcal M}^p_q({\mathbb R}^n) : \lim\limits_{R\to \infty} \|f\chi_{\{|f|> R\}\cup(\mathbb{R}^n\setminus B(R))}\|_{\mathcal{M}^p_q({\mathbb R}^n)}=0 \},
		\]
		\[
			\overline{\mathcal M}{}^p_q({\mathbb R}^n)
			=\{f \in {\mathcal M}^p_q({\mathbb R}^n) : \lim\limits_{R\to \infty} 
			\|f\chi_{[R,\infty)}(|f|)\|_{\mathcal{M}^p_q({\mathbb R}^n)}=0\},
		\] 
		\[
			\overset{*}{\mathcal M}{}^p_q({\mathbb R}^n)
			=\{f \in {\mathcal M}^p_q({\mathbb R}^n) : \lim\limits_{R\to \infty} 
			\|f\chi_{\mathbb{R}^n\setminus B(R)}\|_{\mathcal{M}^p_q({\mathbb R}^n)}=0\}.
		\]
\end{proposition}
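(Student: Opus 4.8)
The plan is to establish each of the three identities by a double inclusion, exploiting that all three generating classes consist of functions that are automatically small at infinity and/or at large values, together with the pointwise monotonicity $\|f\chi_E\|_{\mathcal{M}^p_q}\le\|f\|_{\mathcal{M}^p_q}$, valid for every measurable $E$ since $|f\chi_E|\le|f|$. I would begin with the star subspace, which is the simplest. For the inclusion of $\overset{*}{\mathcal M}{}^p_q(\mathbb{R}^n)$ into the right-hand set, take $f$ in the closure and a sequence $g_k\in L^0_c(\mathbb{R}^n)\cap\mathcal{M}^p_q(\mathbb{R}^n)$ with $\|f-g_k\|_{\mathcal{M}^p_q}\to0$; since each $g_k$ vanishes outside a large ball, monotonicity gives $\|f\chi_{\mathbb{R}^n\setminus B(R)}\|_{\mathcal{M}^p_q}\le\|f-g_k\|_{\mathcal{M}^p_q}$ for $R$ large, and letting $R\to\infty$ then $k\to\infty$ yields the limit condition. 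Conversely, if $f$ satisfies the limit condition, then $g_R:=f\chi_{B(R)}\in L^0_c\cap\mathcal{M}^p_q$ and $\|f-g_R\|_{\mathcal{M}^p_q}=\|f\chi_{\mathbb{R}^n\setminus B(R)}\|_{\mathcal{M}^p_q}\to0$, so $f$ lies in the closure.

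The bar subspace follows the same template, with the truncation now in the range rather than in space. For the forward inclusion I would use that if $g_k\in L^\infty\cap\mathcal{M}^p_q$ with $\|g_k\|_{L^\infty}=M_k$, then on $\{|f|\ge R\}$ with $R>2M_k$ one has $|f|\le2|f-g_k|$, whence $\|f\chi_{[R,\infty)}(|f|)\|_{\mathcal{M}^p_q}\le2\|f-g_k\|_{\mathcal{M}^p_q}$ for all such $R$; letting $R\to\infty$ and then $k\to\infty$ gives the condition. The reverse inclusion uses $g_R:=f\chi_{\{|f|<R\}}$, which is bounded by $R$, lies in $L^\infty\cap\mathcal{M}^p_q$, and satisfies $\|f-g_R\|_{\mathcal{M}^p_q}=\|f\chi_{[R,\infty)}(|f|)\|_{\mathcal{M}^p_q}\to0$.

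For the tilde subspace, write $E_R:=\{|f|>R\}\cup(\mathbb{R}^n\setminus B(R))$. The forward inclusion combines the two previous ideas: for $g_k\in C_c^\infty$ with $\|g_k\|_{L^\infty}=M_k$ and support in $B(\rho_k)$, I would bound $|g_k|\chi_{E_R}\le M_k\chi_{\{|f|>R\}}+|g_k|\chi_{\mathbb{R}^n\setminus B(R)}$, where the second term vanishes for $R>\rho_k$, while for the first I use the elementary estimate $\chi_{\{|f|>R\}}\le R^{-1}|f|$, giving $\|g_k\chi_{E_R}\|_{\mathcal{M}^p_q}\le R^{-1}M_k\|f\|_{\mathcal{M}^p_q}\to0$; together with $\|(f-g_k)\chi_{E_R}\|_{\mathcal{M}^p_q}\le\|f-g_k\|_{\mathcal{M}^p_q}$ this yields the limit.

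The reverse inclusion for the tilde space is where the real work lies, and I expect it to be the main obstacle. Given $f$ with $\|f\chi_{E_R}\|_{\mathcal{M}^p_q}\to0$, the natural first approximant $g_R:=f\chi_{\{|f|\le R\}\cap B(R)}$ is only bounded and compactly supported, not smooth, so it belongs to $L^\infty\cap L^0_c\cap\mathcal{M}^p_q$ but not yet to $C_c^\infty$. The crux is to upgrade such a function to a $C_c^\infty$ approximant via mollification $g_R*\phi_\varepsilon$ and to verify the convergence in the \emph{Morrey} norm. Here I would invoke the embedding noted in the introduction, $\|h\|_{\mathcal{M}^p_q}\le\|h\|_{L^p}$, so that $\|g_R-g_R*\phi_\varepsilon\|_{\mathcal{M}^p_q}\le\|g_R-g_R*\phi_\varepsilon\|_{L^p}$; since $g_R$ is bounded with compact support it lies in $L^p$, and the classical $L^p$-convergence of mollifications sends the right-hand side to $0$ as $\varepsilon\to0$, while $g_R*\phi_\varepsilon\in C_c^\infty$. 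A diagonal choice of $R$ and $\varepsilon$ then exhibits $f$ as a Morrey-limit of $C_c^\infty$ functions, and this extra mollification step is precisely what makes the tilde characterization strictly finer than the bar and star ones.
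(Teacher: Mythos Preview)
The paper does not actually prove this proposition; it is quoted verbatim from the reference \cite{MS1} (Propositions 331--333) and used as a black box. Your argument is correct and is essentially the standard one: each inclusion is handled by the natural truncation (in space, in range, or both), and for the tilde space the extra mollification step is justified via the embedding $\|h\|_{\mathcal{M}^p_q}\le\|h\|_{L^p}$ applied to the bounded, compactly supported truncate $g_R$, which is exactly the device used in \cite{MS1}. So there is nothing to compare beyond noting that you have reproduced the expected proof.
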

From the proposition above, we immediately get the decomposition below. (See \cite{MS1})
\begin{proposition}{\rm (\cite[Corollary 334]{MS1})}\label{decom}
	Let $1 \le q \le p < \infty$. Then,
	\begin{equation*}
		\widetilde{\mathcal M}^p_q({\mathbb R}^n)
		=
		\overline{\mathcal M}{}^p_q({\mathbb R}^n)
		\cap
		\overset{*}{\mathcal M}{}^p_q({\mathbb R}^n).
	\end{equation*}
\end{proposition}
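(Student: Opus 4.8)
The plan is to derive the decomposition purely from the three characterizations recorded in Proposition~\ref{char}, so that the statement reduces to an equivalence between the scalar conditions defining the three subspaces. Write $E_R := \{|f|>R\}$ and $F_R := {\mathbb R}^n\setminus B(R)$. By Proposition~\ref{char}, membership $f\in\widetilde{\mathcal M}^p_q({\mathbb R}^n)$ is exactly the condition $\lim_{R\to\infty}\|f\chi_{E_R\cup F_R}\|_{{\mathcal M}^p_q}=0$; membership $f\in\overline{\mathcal M}{}^p_q({\mathbb R}^n)$ is $\lim_{R\to\infty}\|f\chi_{\{|f|\ge R\}}\|_{{\mathcal M}^p_q}=0$; and membership $f\in\overset{*}{\mathcal M}{}^p_q({\mathbb R}^n)$ is $\lim_{R\to\infty}\|f\chi_{F_R}\|_{{\mathcal M}^p_q}=0$. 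Throughout I would rely on two elementary features of the Morrey norm: monotonicity, i.e.\ $\|g\|_{{\mathcal M}^p_q}\le\|h\|_{{\mathcal M}^p_q}$ whenever $|g|\le|h|$ pointwise (immediate from the defining integral), and subadditivity given by the triangle inequality.

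For the inclusion $\widetilde{\mathcal M}^p_q({\mathbb R}^n)\subseteq\overline{\mathcal M}{}^p_q({\mathbb R}^n)\cap\overset{*}{\mathcal M}{}^p_q({\mathbb R}^n)$, I would invoke monotonicity together with the set containments $E_R\subseteq E_R\cup F_R$ and $F_R\subseteq E_R\cup F_R$. These yield $\|f\chi_{E_R}\|_{{\mathcal M}^p_q}\le\|f\chi_{E_R\cup F_R}\|_{{\mathcal M}^p_q}$ and $\|f\chi_{F_R}\|_{{\mathcal M}^p_q}\le\|f\chi_{E_R\cup F_R}\|_{{\mathcal M}^p_q}$; letting $R\to\infty$ forces both left-hand sides to $0$, which gives the star condition at once and the bar condition after matching the level sets as noted below.

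For the reverse inclusion, suppose $f$ lies in both $\overline{\mathcal M}{}^p_q({\mathbb R}^n)$ and $\overset{*}{\mathcal M}{}^p_q({\mathbb R}^n)$. From the pointwise bound $\chi_{E_R\cup F_R}\le\chi_{E_R}+\chi_{F_R}$ and the triangle inequality I would obtain the decomposition $\|f\chi_{E_R\cup F_R}\|_{{\mathcal M}^p_q}\le\|f\chi_{E_R}\|_{{\mathcal M}^p_q}+\|f\chi_{F_R}\|_{{\mathcal M}^p_q}$. The second summand tends to $0$ by the star hypothesis, and the first tends to $0$ by the bar hypothesis, so the tilde condition holds and $f\in\widetilde{\mathcal M}^p_q({\mathbb R}^n)$.

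The only point requiring minor care is the mismatch between the open level set $\{|f|>R\}$ appearing in the tilde characterization and the closed level set $\{|f|\ge R\}$ appearing in the bar characterization. This is harmless: the nestings $\{|f|>R\}\subseteq\{|f|\ge R\}\subseteq\{|f|>R-1\}$ show, via monotonicity of the norm, that the two truncated families have the same limiting behaviour as $R\to\infty$, so each may be substituted for the other. I do not anticipate any genuine obstacle here; once Proposition~\ref{char} is available, the decomposition is a formal consequence of monotonicity and subadditivity of the Morrey norm, which is precisely why it is stated as a corollary.
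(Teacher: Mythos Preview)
Your argument is correct and is precisely the route the paper intends: it states that the decomposition follows immediately from Proposition~\ref{char} and refers to \cite{MS1}, without spelling out details. Your handling of the open-versus-closed level sets is also fine and would be the only point worth mentioning if one were to write the proof out in full.
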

Also, the bar subspace $\overline{\mathcal M}{}^p_q({\mathbb R}^n)$ enjoys a useful property \cite{2025};
\begin{proposition}\label{barsubsp}{\rm (\cite[Lemma 6]{2025})}
	If
	\begin{equation*}
		1<q \le p<\infty, \quad 1<t \le s<\infty, \quad p<s, \quad {\rm and} \quad \frac{q}{p}=\frac{t}{s}, \quad
	\end{equation*}
	then
	\[
		{\mathcal M}^p_q({\mathbb R}^n) \cap {\mathcal M}^s_t({\mathbb R}^n) \subset \overline{{\mathcal M}}{}^p_q({\mathbb R}^n).
	\]
\end{proposition}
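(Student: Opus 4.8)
The plan is to invoke the characterization of the bar subspace from Proposition~\ref{char}, which reduces the desired inclusion to a single quantitative tail estimate. Concretely, given $f \in {\mathcal M}^p_q({\mathbb R}^n) \cap {\mathcal M}^s_t({\mathbb R}^n)$, it suffices to prove that
\[
	\lim_{R\to\infty}\|f\chi_{[R,\infty)}(|f|)\|_{{\mathcal M}^p_q({\mathbb R}^n)}=0,
\]
where $\chi_{[R,\infty)}(|f|)$ denotes the indicator of the level set $\{x : |f(x)|\ge R\}$. The guiding idea is that the extra integrability carried by membership in ${\mathcal M}^s_t$---with the hypothesis $p<s$ forcing $t>q$, since $q/p=t/s$ gives $t=qs/p>q$---lets us absorb the large values of $f$ at a polynomial rate in $R$.

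First I would record the elementary pointwise bound on the level set: whenever $|f(x)|\ge R$, the fact that $t>q$ yields
\[
	|f(x)|^q = |f(x)|^t\,|f(x)|^{-(t-q)}\le R^{-(t-q)}|f(x)|^t.
\]
Fixing an arbitrary ball $B=B(x,r)$ and integrating over $B\cap\{|f|\ge R\}$ gives
\[
	\int_{B}|f(y)|^q\chi_{[R,\infty)}(|f(y)|)\,{\rm d}y\le R^{-(t-q)}\int_B|f(y)|^t\,{\rm d}y.
\]
The ${\mathcal M}^s_t$ norm then controls the right-hand side through $\int_B|f|^t\le \|f\|_{{\mathcal M}^s_t}^t\,|B|^{1-t/s}$. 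Substituting this and multiplying by the Morrey weight $|B|^{q/p-1}$ produces a factor $|B|^{q/p-t/s}$ on the ball volume.

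The decisive point---and the only place the hypotheses genuinely interact---is the scaling condition $q/p=t/s$, which makes this volume exponent vanish identically, so that the dependence on $|B|$ drops out uniformly in $B$. Taking the supremum over all balls and then the $q$-th root delivers
\[
	\|f\chi_{[R,\infty)}(|f|)\|_{{\mathcal M}^p_q}\le R^{-(t-q)/q}\,\|f\|_{{\mathcal M}^s_t}^{t/q},
\]
and since $t>q$ the right-hand side tends to $0$ as $R\to\infty$, which is exactly the tail condition characterizing $\overline{\mathcal M}{}^p_q({\mathbb R}^n)$ in Proposition~\ref{char}. I do not anticipate a real obstacle: once the characterization is invoked the argument is short, and the entire content lies in tracking exponents. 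The one point I would state carefully is the role of $t>q$---needed both for the pointwise bound and for the decay in $R$---together with the verification that the volume exponent $q/p-t/s$ cancels exactly rather than merely remaining bounded, as this exact cancellation is what allows the supremum over balls to be taken with no residual dependence on the radius.
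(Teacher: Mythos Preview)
Your argument is correct. The paper does not actually supply a proof of this proposition; it is quoted as \cite[Lemma~6]{2025} and used as a black box in the preliminaries. Your proof via the level-set characterization in Proposition~\ref{char}, the pointwise bound $|f|^q\le R^{-(t-q)}|f|^t$ on $\{|f|\ge R\}$, and the exact cancellation $q/p-t/s=0$ is clean and complete, so there is nothing in the paper to compare it against beyond noting that you have filled in what the paper outsources.
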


\subsection{Properties of operators}
Here, we recall some previous studies. The first one is about the fractional integral operator $I_{\alpha}$.
It is known as Adams' theorem such that the fractional integral operator $I_{\alpha}$ is bounded from the Morrey space ${\mathcal M}^{p}_q({\mathbb R}^n)$ to the Morrey space ${\mathcal M}^{s}_t({\mathbb R}^n)$.
\begin{proposition}{\rm(\cite{Adams})}\label{Adams}
	Suppose $1 < q \le p < \infty$ and $0 < \alpha < n$. Then, for any $f \in {\mathcal M}^{p}_q({\mathbb R}^n)$, the integration in $I_{\alpha}$ converges absolutely almost all $x \in {\mathbb{R}}^n$ and satisfies the following estimate with a constant $C$ independent of $f$:
	\[
		\|I_{\alpha}f\|_{{\mathcal M}^{s}_t({\mathbb R}^n)} \le C \|f\|_{{\mathcal M}^{p}_q({\mathbb R}^n)},
	\]
where $1<t \le s < \infty$ satisfies the following:
	\[
		\frac{1}{s}=\frac{1}{p}-\frac{\alpha}{n} \quad {\rm and} \quad \frac{p}{q}=\frac{s}{t}.
	\]
\end{proposition}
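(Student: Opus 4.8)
The plan is to establish the classical Hedberg-type pointwise bound and then convert it into a Morrey estimate by exploiting the two scaling relations together with the boundedness of the Hardy--Littlewood maximal operator $M$ on Morrey spaces. Throughout I would work with $|f|$ in place of $f$, so that the absolute-convergence assertion emerges as a byproduct of the pointwise estimate.

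First I would fix $x$ and a free parameter $\rho>0$ and split
\[
	\int_{\mathbb{R}^n}\frac{|f(y)|}{|x-y|^{n-\alpha}}\,dy
	=\int_{|x-y|<\rho}+\int_{|x-y|\ge\rho}
	=:\mathrm{I}(x,\rho)+\mathrm{II}(x,\rho).
\]
For the near part $\mathrm{I}$, decomposing $\{|x-y|<\rho\}$ into dyadic annuli $\{2^{-j-1}\rho\le|x-y|<2^{-j}\rho\}$ and bounding $|x-y|^{\alpha-n}$ by $(2^{-j}\rho)^{\alpha-n}$ on each, the average of $|f|$ over $B(x,2^{-j}\rho)$ is controlled by $Mf(x)$; summing the geometric series in $(2^{-j}\rho)^{\alpha}$, which converges since $\alpha>0$, gives $\mathrm{I}(x,\rho)\lesssim\rho^{\alpha}Mf(x)$. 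For the far part $\mathrm{II}$, I would decompose into the annuli $\{2^{j}\rho\le|x-y|<2^{j+1}\rho\}$ and use H\"older's inequality together with the Morrey definition to estimate $\int_{B(x,2^{j}\rho)}|f|\lesssim(2^{j}\rho)^{n(1-\frac1p)}\|f\|_{\mathcal{M}^p_q}$; summing the resulting series in $(2^{j}\rho)^{\alpha-\frac np}$, which converges as $j\to\infty$ precisely because $\tfrac1s=\tfrac1p-\tfrac{\alpha}{n}>0$ forces $\alpha-\tfrac np<0$, yields $\mathrm{II}(x,\rho)\lesssim\rho^{\alpha-\frac np}\|f\|_{\mathcal{M}^p_q}$. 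Optimizing $\rho^{\alpha}Mf(x)+\rho^{\alpha-\frac np}\|f\|_{\mathcal{M}^p_q}$ by balancing the two terms produces the Hedberg inequality
\[
	I_\alpha|f|(x)\lesssim\|f\|_{\mathcal{M}^p_q}^{\,1-p/s}\,\bigl(Mf(x)\bigr)^{p/s},
\]
using $1-\tfrac{\alpha}{n}p=\tfrac ps$. Since $Mf$ is finite almost everywhere, this already establishes the absolute convergence of $I_\alpha f(x)$ for a.e.\ $x$.

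It then remains to take the $\mathcal{M}^s_t$-norm of the right-hand side. The key computation is a power trick: because $t=\tfrac{sq}{p}$, raising $(Mf)^{p/s}$ to the power $t$ yields exactly $(Mf)^{q}$, while the exponent on $|B(x,r)|$ appearing in the $\mathcal{M}^s_t$-norm becomes $\tfrac ts-1=\tfrac qp-1$ precisely by the relation $\tfrac pq=\tfrac st$. Hence $\|(Mf)^{p/s}\|_{\mathcal{M}^s_t}=\|Mf\|_{\mathcal{M}^p_q}^{\,p/s}$, so that
\[
	\|I_\alpha f\|_{\mathcal{M}^s_t}
	\lesssim\|f\|_{\mathcal{M}^p_q}^{\,1-p/s}\,\|Mf\|_{\mathcal{M}^p_q}^{\,p/s}.
\]
Finally I would invoke the well-known boundedness of $M$ on $\mathcal{M}^p_q(\mathbb{R}^n)$ for $1<q\le p<\infty$ (Chiarenza--Frasca), which gives $\|Mf\|_{\mathcal{M}^p_q}\lesssim\|f\|_{\mathcal{M}^p_q}$ and hence the claimed estimate.

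The main obstacle is not any single estimate but the bookkeeping that ties the two scaling relations to the convergence of the dyadic sums and to the power trick: the exponent $\alpha-\tfrac np$ must be negative for the far part (guaranteed by $\tfrac1s>0$), the Hedberg balance must reproduce exactly the exponent $\tfrac ps$, and the homogeneity of the Morrey norm under $g\mapsto g^{p/s}$ must match $\tfrac pq=\tfrac st$. Checking that these constraints are simultaneously consistent, and that the only external input required is the Morrey boundedness of $M$ (which is where the hypothesis $q>1$ is used), is the heart of the argument.
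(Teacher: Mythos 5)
The paper offers no proof of Proposition~\ref{Adams}: it is quoted directly from Adams' 1975 paper, so there is nothing internal to compare against. Your argument --- the Hedberg splitting giving $I_\alpha|f|(x)\lesssim \rho^{\alpha}Mf(x)+\rho^{\alpha-\frac np}\|f\|_{{\mathcal M}^p_q}$, balanced in $\rho$ to yield $I_\alpha|f|(x)\lesssim\|f\|_{{\mathcal M}^p_q}^{1-p/s}\bigl(Mf(x)\bigr)^{p/s}$, then the power trick and the Chiarenza--Frasca maximal bound --- is precisely the standard modern proof of Adams' theorem, and it is correct: the exponent bookkeeping all checks out (the far sum converges since $\tfrac1s>0$ forces $\alpha-\tfrac np<0$; with $t=\tfrac{sq}{p}$ one has $\tfrac ps\cdot t=q$ and $\tfrac1s-\tfrac1t=\tfrac ps\bigl(\tfrac1p-\tfrac1q\bigr)$, so $\|(Mf)^{p/s}\|_{{\mathcal M}^s_t}=\|Mf\|_{{\mathcal M}^p_q}^{p/s}$ exactly as you claim), the hypothesis $q>1$ enters only through the maximal boundedness, and the a.e.\ finiteness of $Mf$ --- which you need both to perform the balancing pointwise and to conclude absolute convergence --- is indeed supplied by $Mf\in{\mathcal M}^p_q({\mathbb R}^n)$ (the degenerate cases $Mf(x)=0$, i.e.\ $f=0$ a.e., and $Mf(x)=\infty$ on a null set cause no trouble).
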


The study of boundedness of $[a_{\{1, ... ,l\}},I_{\alpha}]$ dates back to 2005; see the work of Dongyong Yang and Yan Meng \cite{mlc-bdd}.

\begin{proposition}{\rm (\cite{mlc-bdd})}\label{mlc-bdd}
	Let $l \in \mathbb N$, $j=1,...,l$, $a_j \in {\rm BMO}({\mathbb R}^n)$, $1 < q \le p < \infty$, $0 < \alpha < n$, $\frac{1}{s}=\frac{1}{p}-\frac{\alpha}{n}$ and $\frac{p}{q}=\frac{s}{t}$. Then
	\begin{align*}
		\| [a_{\{1, ... ,l\}},I_{\alpha}] \|_{{\mathcal M}^{p}_q({\mathbb R}^n) \rightarrow {\mathcal M}^{s}_t({\mathbb R}^n)}
		\le C \prod_{j=1}^l \|a_j\|_*
	\end{align*}
	where the constant $C$ is independent of $\{a_j\}$, $j=1,...,l$.

\end{proposition}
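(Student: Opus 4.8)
The plan is to argue by induction on the order $l$, reducing the inductive step to a pointwise bound for the Fefferman--Stein sharp maximal function and then invoking the mapping properties of the Hardy--Littlewood and fractional maximal operators on Morrey spaces. For $l=0$ the operator is $I_\alpha$ itself and the claim is exactly Adams' theorem (Proposition \ref{Adams}). For the inductive step, write $[a_\sigma,I_\alpha]$ for the commutator built from the symbols $\{a_i:i\in\sigma\}$ for $\sigma\subseteq\{1,\dots,l\}$, with the convention $[a_\emptyset,I_\alpha]=I_\alpha$, and set $M^\sharp g(x)=\sup_{B\ni x}\frac{1}{|B|}\int_B|g-m_B(g)|\,{\rm d}y$, $M_\delta g=(M(|g|^\delta))^{1/\delta}$ and $M^\sharp_\delta g=(M^\sharp(|g|^\delta))^{1/\delta}$, where $m_B(g)$ is the average of $g$ over $B$. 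Since $t=\tfrac{q}{p}s>q>1$, the maximal operator $M$, and hence $M_\delta$ for $0<\delta<1$, is bounded on $\mathcal M^s_t(\mathbb{R}^n)$ (Chiarenza--Frasca), and the sharp maximal inequality $\|g\|_{\mathcal M^s_t}\le\|M_\delta g\|_{\mathcal M^s_t}\lesssim\|M^\sharp_\delta g\|_{\mathcal M^s_t}$ holds for $g$ in the a priori class in which $M_\delta g\in\mathcal M^s_t$; this class contains $[a_\sigma,I_\alpha]f$ for $f\in L^\infty_{\rm c}(\mathbb{R}^n)$, and a density argument then transfers the bound to all of $\mathcal M^p_q(\mathbb{R}^n)$.

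The analytic core is the pointwise estimate
\[
M^\sharp_\delta\big([a_{\{1,\dots,l\}},I_\alpha]f\big)(x)\lesssim\prod_{j=1}^l\|a_j\|_*\,N_{\alpha,a_0}f(x)+\sum_{\sigma\subsetneq\{1,\dots,l\}}\Big(\prod_{j\notin\sigma}\|a_j\|_*\Big)M_\delta\big([a_\sigma,I_\alpha]f\big)(x),
\]
valid for any fixed $0<\delta<1$ and any $a_0\in(1,q)$, where $N_{\alpha,a_0}f(x):=\sup_{B\ni x}|B|^{\alpha/n}\big(\frac{1}{|B|}\int_B|f|^{a_0}\big)^{1/a_0}$. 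To prove it, fix a ball $B=B(x_0,r)\ni x$, put $\lambda_j:=m_{2B}(a_j)$, and use the expansion
\[
\prod_{j=1}^l(a_j(z)-a_j(y))=\sum_{\sigma\subseteq\{1,\dots,l\}}(-1)^{l-|\sigma|}\prod_{i\in\sigma}(a_i(z)-\lambda_i)\prod_{j\notin\sigma}(a_j(y)-\lambda_j),
\]
which splits $[a_{\{1,\dots,l\}},I_\alpha]f(z)$ into the sum over $\sigma$ of $\prod_{i\in\sigma}(a_i(z)-\lambda_i)\,I_\alpha\big(\prod_{j\notin\sigma}(a_j-\lambda_j)f\big)(z)$, and decompose $f=f\chi_{2B}+f\chi_{(2B)^c}$ in each summand. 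For the local parts one applies the $L^{a}\to L^{a^\ast}$ boundedness of $I_\alpha$ with $\tfrac{1}{a^\ast}=\tfrac1a-\tfrac\alpha n$, then the generalized H\"older inequality and the John--Nirenberg inequality to convert each factor $a_i-\lambda_i$ into a power of $\|a_i\|_*$; a bookkeeping of the exponents of $|B|$ collapses precisely to $\prod_{j}\|a_j\|_*\,N_{\alpha,a_0}f(x)$, which is where the constraint $a_0<q$ enters. For the global parts one replaces the kernel $|z-y|^{\alpha-n}$ by $|x_0-y|^{\alpha-n}$, whose error is $O\big(r|x_0-y|^{\alpha-n-1}\big)$, and sums the resulting geometric series over the annuli $2^{k+1}B\setminus 2^kB$; repeatedly using the identity $a_j(y)-\lambda_j=(a_j(y)-a_j(z))+(a_j(z)-\lambda_j)$ inside the surviving integrals makes the lower-order commutators $[a_\sigma,I_\alpha]$ emerge, so that after subtracting the single constant $c_B$ formed from the center values these terms produce the $M_\delta([a_\sigma,I_\alpha]f)(x)$ contributions (the case $\sigma=\emptyset$ giving the diagonal term $M_\delta(I_\alpha f)$).

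With the pointwise estimate in hand the induction closes at once. Applying $\|\cdot\|_{\mathcal M^s_t}$ and the sharp maximal inequality, the fractional term is controlled because $N_{\alpha,a_0}f=(M_{a_0\alpha}(|f|^{a_0}))^{1/a_0}$ and, using $M_{a_0\alpha}(g)\lesssim I_{a_0\alpha}(|g|)$ together with Adams' theorem for the scaled exponents,
\[
\|N_{\alpha,a_0}f\|_{\mathcal M^s_t}=\big\|M_{a_0\alpha}(|f|^{a_0})\big\|_{\mathcal M^{s/a_0}_{t/a_0}}^{1/a_0}\lesssim\big\||f|^{a_0}\big\|_{\mathcal M^{p/a_0}_{q/a_0}}^{1/a_0}=\|f\|_{\mathcal M^p_q};
\]
here one checks that $\tfrac{1}{s/a_0}=\tfrac{1}{p/a_0}-\tfrac{a_0\alpha}{n}$ reduces to the hypothesis $\tfrac1s=\tfrac1p-\tfrac\alpha n$, that $\tfrac{p/a_0}{q/a_0}=\tfrac{s/a_0}{t/a_0}$ is the hypothesis $\tfrac pq=\tfrac st$, and that $a_0<q$ keeps the lower index above $1$ while $a_0\alpha<n$. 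The remaining terms are bounded by $\big(\prod_{j\notin\sigma}\|a_j\|_*\big)\|[a_\sigma,I_\alpha]f\|_{\mathcal M^s_t}$ via the $\mathcal M^s_t$-boundedness of $M_\delta$, and $\|[a_\sigma,I_\alpha]f\|_{\mathcal M^s_t}\lesssim\prod_{i\in\sigma}\|a_i\|_*\,\|f\|_{\mathcal M^p_q}$ by the induction hypothesis (the case $\sigma=\emptyset$ being Adams' theorem). Recombining the BMO factors yields exactly $\prod_{j=1}^l\|a_j\|_*\,\|f\|_{\mathcal M^p_q}$, with the constant independent of the $a_j$.

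The main obstacle is the pointwise sharp maximal estimate, and within it the global part: one must expand the product into its $2^l$ summands, track the many off-diagonal contributions, and verify that subtracting a single constant $c_B$ makes each lower-order commutator surface with its correct BMO weight while the kernel-smoothness errors sum to a convergent series dominated by $N_{\alpha,a_0}f(x)$. The choice of the auxiliary exponent $a_0\in(1,q)$, tying the effective fractional order $a_0\alpha$ of $N_{\alpha,a_0}$ to the Adams scaling, is the point that must be handled with care, since the naive fractional maximal $(M_\alpha(|f|^{a_0}))^{1/a_0}$ would carry the wrong gain and fail to map $\mathcal M^p_q$ into $\mathcal M^s_t$.
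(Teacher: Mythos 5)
This statement is one the paper does not prove at all: Proposition~\ref{mlc-bdd} is imported verbatim from \cite{mlc-bdd} as a known preliminary, so there is no in-paper proof to compare yours against. Measured against the standard proof in the literature, your argument is essentially the canonical one: induction on $l$ through a Fefferman--Stein pointwise estimate of P\'erez--Trujillo-Gonz\'alez type, with the local part controlled by the fractional maximal operator $N_{\alpha,a_0}f=(M_{a_0\alpha}(|f|^{a_0}))^{1/a_0}$ for $a_0\in(1,q)$, and the Morrey norms closed up via Chiarenza--Frasca for $M_\delta$ and Adams' theorem (Proposition~\ref{Adams}) for $N_{\alpha,a_0}$; your exponent bookkeeping for the scaled Adams application ($t>q>a_0$, $a_0\alpha<n$, and the reduction of the scaled relations to $\tfrac1s=\tfrac1p-\tfrac\alpha n$ and $\tfrac pq=\tfrac st$) is correct, as is your identification that the naive $(M_\alpha(|f|^{a_0}))^{1/a_0}$ would carry the wrong gain.

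Two technical points need repair, though neither is fatal. First, your transfer from $f\in L^\infty_{\rm c}$ to general $f\in\mathcal M^p_q(\mathbb R^n)$ cannot be a \emph{density} argument: for $q<p$ compactly supported bounded functions are \emph{not} dense in $\mathcal M^p_q(\mathbb R^n)$ (this non-density is precisely why the present paper works with the proper closed subspace $\widetilde{\mathcal M}^p_q(\mathbb R^n)$). The correct fix is monotone truncation $f_N=f\chi_{B(N)}\chi_{\{|f|\le N\}}$, the uniform bound on the $f_N$, pointwise a.e.\ convergence $[a_{\{1,\dots,l\}},I_\alpha]f_N\to[a_{\{1,\dots,l\}},I_\alpha]f$ (the defining integral converges absolutely a.e.), and the Fatou property of the Morrey norm on each ball. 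Second, your pointwise estimate as written has the same $\delta$ on both sides; what the oscillation argument actually yields, after applying H\"older to the factors $\prod_{i\in\sigma}(a_i-\lambda_i)\cdot[a_\tau,I_\alpha]f$, is $M_\varepsilon([a_\sigma,I_\alpha]f)$ with some $\varepsilon\in(\delta,1)$ (one needs $\delta r<1$ in the John--Nirenberg step). For the induction over $l$ levels you must therefore fix a chain $0<\delta_0<\delta_1<\cdots<\delta_l<1$ and estimate $M^\sharp_{\delta_k}$ by $M_{\delta_{k+1}}$ of the lower-order commutators; since every $M_\varepsilon$ with $\varepsilon<1<t$ is bounded on $\mathcal M^s_t(\mathbb R^n)$, the conclusion is unaffected. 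With these two adjustments, and keeping the a priori finiteness check you already flag for the sharp maximal inequality (constants have infinite Morrey norm, so the a priori class genuinely matters), the proof is sound.
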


In 2008, Sawano and Shirai showed the compactness property of $[a_{\{1, ... ,l\}},I_{\alpha}]$.

\begin{proposition}{\rm (\cite{mlc-cpt})}
	If $a_1 \in {\rm VMO}({\mathbb R}^n)$ and $a_j,l,p,q,s,t,\alpha$ as in the proposition above, then 
	\begin{align*}
		[a_{\{1, ... ,l\}},I_{\alpha}]:{\mathcal M}^{p}_q({\mathbb R}^n) \rightarrow {\mathcal M}^{s}_t({\mathbb R}^n)
	\end{align*}
	is a compact operator.
\end{proposition}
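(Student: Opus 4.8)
The plan is to combine a density reduction in the first symbol with a Fr\'echet--Kolmogorov--Riesz precompactness criterion adapted to Morrey spaces. First I would use that ${\rm VMO}({\mathbb R}^n)$ is by definition the ${\rm BMO}$-closure of $C_c^\infty({\mathbb R}^n)$: choose $a_1^{(k)} \in C_c^\infty({\mathbb R}^n)$ with $\|a_1 - a_1^{(k)}\|_* \to 0$. Because the kernel depends multilinearly on the symbols, replacing $a_1$ by $a_1^{(k)}$ changes the operator by the commutator with symbols $\{a_1 - a_1^{(k)}, a_2, \dots, a_l\}$, so Proposition~\ref{mlc-bdd} gives
\[
	\big\| [a_{\{1,\dots,l\}},I_\alpha] - [a^{(k)}_{\{1,\dots,l\}},I_\alpha] \big\|_{{\mathcal M}^p_q \to {\mathcal M}^s_t} \lesssim \|a_1 - a_1^{(k)}\|_* \prod_{j=2}^l \|a_j\|_* \longrightarrow 0 .
\]
Since the compact operators form a closed subspace of the bounded operators under the operator norm, it suffices to prove compactness under the additional hypothesis $a_1 \in C_c^\infty({\mathbb R}^n)$.

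With $a_1$ smooth and compactly supported, I would show that $\mathcal F := \{ [a_{\{1,\dots,l\}},I_\alpha] f : \|f\|_{{\mathcal M}^p_q} \le 1 \}$ is relatively compact by verifying that a bounded family in ${\mathcal M}^s_t$ is precompact once it is uniformly tight, uniformly truncated in level sets, and uniformly continuous under translations:
\[
	\lim_{R\to\infty}\sup_{g\in\mathcal F}\|g\chi_{{\mathbb R}^n\setminus B(R)}\|_{{\mathcal M}^s_t}=0, \quad \lim_{R\to\infty}\sup_{g\in\mathcal F}\|g\chi_{\{|g|>R\}}\|_{{\mathcal M}^s_t}=0, \quad \lim_{|h|\to 0}\sup_{g\in\mathcal F}\|g(\cdot+h)-g\|_{{\mathcal M}^s_t}=0 .
\]
These are exactly the quantities governing the star and bar subspaces (Proposition~\ref{char}), so by Proposition~\ref{decom} the first two conditions place $\mathcal F$ in $\widetilde{\mathcal M}^s_t = \overline{\mathcal M}{}^s_t \cap \overset{*}{\mathcal M}{}^s_t \subset {\mathcal M}^s_t$; precompactness there yields precompactness in ${\mathcal M}^s_t$ (and explains the sharper target of Theorem~\ref{main}). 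Uniform boundedness of $\mathcal F$ is immediate from Proposition~\ref{mlc-bdd}.

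For the tail (star) condition, the compact support of $a_1$, say in $B(0,L)$, forces $a_1(x)=0$ for $|x|$ large, so for such $x$ the integrand is supported in $y\in B(0,L)$ and the fractional kernel contributes the decaying factor $|x-y|^{\alpha-n}\sim|x|^{\alpha-n}$, giving the uniform tightness of $\mathcal F$. For the level-set (bar) condition I would exploit the Lipschitz bound $|a_1(x)-a_1(y)|\lesssim|x-y|$: it supplies one extra power of $|x-y|$, so near the diagonal the operator is dominated by a commutator of improved order $\alpha+\theta$ (small $\theta>0$) in the remaining ${\rm BMO}$ symbols, which is bounded into a better Morrey space ${\mathcal M}^{s_1}_{t_1}$ with $\tfrac{1}{s_1}=\tfrac1p-\tfrac{\alpha+\theta}{n}$ by Propositions~\ref{Adams} and~\ref{mlc-bdd}. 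Thus $[a_{\{1,\dots,l\}},I_\alpha]f\in{\mathcal M}^s_t\cap{\mathcal M}^{s_1}_{t_1}$ with $s<s_1$, and Proposition~\ref{barsubsp} converts this two-space membership into the uniform control of high level sets over the unit ball.

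The main obstacle is the translation-equicontinuity, because $a_2,\dots,a_l$ are merely ${\rm BMO}$ and the kernel is neither smooth nor even continuous in $x$. The key observation is that the Morrey norm sees only integrated quantities: by the John--Nirenberg inequality each $a_j\in L^r_{\rm loc}({\mathbb R}^n)$ for every $r<\infty$, so translation is continuous on local $L^r$, whence $\|a_j(\cdot+h)-a_j\|_{L^r(B)}\to0$ for each fixed ball. I would first split off the near-diagonal region $|x-y|<\delta$, where the Lipschitz gain from $a_1$ improves the singularity and makes this part small in operator norm uniformly in $h$; on the complementary region the fractional kernel is Lipschitz, $\big|\,|x+h-y|^{\alpha-n}-|x-y|^{\alpha-n}\big|\lesssim|h|\,|x-y|^{\alpha-n-1}$, and together with the compact support of $a_1$ one estimates $\|[a_{\{1,\dots,l\}},I_\alpha]f(\cdot+h)-[a_{\{1,\dots,l\}},I_\alpha]f\|_{{\mathcal M}^s_t}$ by a sum of terms each carrying either a factor $|h|$ or a factor $\|a_j(\cdot+h)-a_j\|_{L^r(B)}$, all tending to $0$ uniformly over the unit ball as $|h|\to0$. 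Assembling the three verified conditions yields the relative compactness of $\mathcal F$, and hence the asserted compactness of $[a_{\{1,\dots,l\}},I_\alpha]$.
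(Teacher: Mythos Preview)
The paper does not supply a proof of this proposition: it is quoted from \cite{mlc-cpt} and used as an input. The paper's own work is Theorem~\ref{main}, whose proof (through Lemma~\ref{apr}) \emph{assumes} the proposition and only verifies that for $a_1\in C_c^\infty({\mathbb R}^n)$ the range already lies in $\widetilde{\mathcal M}^s_t=\overset{*}{\mathcal M}{}^s_t\cap\overline{\mathcal M}{}^s_t$; compactness into this closed subspace then follows from compactness into ${\mathcal M}^s_t$.

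Your plan is therefore a genuinely different, self-contained argument. The density reduction is exactly Lemma~\ref{apr}, and your star/bar verifications run parallel to Sections~4.1--4.2 of the paper (dyadic annuli around ${\rm supp}\,a_1$ for the tail; the H\"older gain $|a_1(x)-a_1(y)|\lesssim|x-y|^\theta$ combined with Proposition~\ref{barsubsp} for the level sets---note this bound is global once $\theta<1$, not merely ``near the diagonal''). The essential novelty on your side is the translation-equicontinuity step, which the paper sidesteps entirely by importing \cite{mlc-cpt}. So your route recovers both the proposition and Theorem~\ref{main} in one pass, at the cost of the hardest estimate; the paper's route is shorter but not self-contained.

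Two places to tighten. First, state precisely which Morrey-space precompactness criterion you are invoking and give a reference (e.g.\ \cite{CDW09}); in fact the level-set condition is not needed for precompactness---boundedness, uniform tightness and translation-equicontinuity suffice---though verifying it does yield the $\widetilde{\mathcal M}^s_t$ refinement for free. Second, in the equicontinuity step the delicate terms are those carrying a factor $a_j(\cdot+h)-a_j$ for $j\ge2$. You correctly note $\|a_j(\cdot+h)-a_j\|_{L^r(B)}\to0$ for each fixed ball, but you must argue why a single fixed ball suffices (it does, since the tail is already uniformly small and $a_1$ has compact support) and then pair this small $L^r$ factor via H\"older with a \emph{uniform} Morrey bound on the remaining factor, which is an $(l{-}1)$-symbol commutator controlled by Proposition~\ref{mlc-bdd} at a shifted exponent. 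The outline is sound; the exponent bookkeeping is where the labour lies.
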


Another preliminary connected to the compactness of $[a_{\{1, ... ,l\}},I_{\alpha}]$ is the following theorem proven by Sawano, Hakim and the present author.

\begin{proposition}{\rm (\cite{2025})}\label{l1}
	Let $l=1$, $0<\alpha<n$, $1<q \le p<\infty$, $1<t \le s<\infty$, and $a_1 \in {\rm VMO}({\mathbb R}^n)$. Assume that
	\[
		\frac{1}{s}=\frac{1}{p}-\frac{\alpha}{n}  \quad {\rm and}\quad \frac{q}{p}=\frac{t}{s}.
	\]
	Then the commutator $[a_{\{1\}},I_\alpha]$ is compact from ${\mathcal M}^p_q({\mathbb R}^n)$ to $\widetilde{\mathcal M}^s_t({\mathbb R}^n)$.
\end{proposition}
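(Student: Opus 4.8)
The plan is to combine the already-known compactness of $[a_{\{1,\dots,l\}},I_\alpha]$ into the full Morrey space ${\mathcal M}^s_t({\mathbb R}^n)$ with a range inclusion into the smaller space $\widetilde{\mathcal M}^s_t({\mathbb R}^n)$. Write $T:=[a_{\{1,\dots,l\}},I_\alpha]$. By the Sawano--Shirai compactness result \cite{mlc-cpt}, $T$ is compact from ${\mathcal M}^p_q({\mathbb R}^n)$ to ${\mathcal M}^s_t({\mathbb R}^n)$. Since $\widetilde{\mathcal M}^s_t({\mathbb R}^n)$ is a closed subspace of ${\mathcal M}^s_t({\mathbb R}^n)$ carrying the same norm, the elementary principle that a compact operator whose range lies in a closed subspace is compact into that subspace (a convergent subsequence of images has its limit in the closed subspace) reduces the theorem to the single inclusion $T\big({\mathcal M}^p_q({\mathbb R}^n)\big)\subset \widetilde{\mathcal M}^s_t({\mathbb R}^n)$. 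By Proposition \ref{decom} this is equivalent to showing $Tf\in\overline{\mathcal M}{}^s_t({\mathbb R}^n)$ and $Tf\in\overset{*}{\mathcal M}{}^s_t({\mathbb R}^n)$ for every $f\in{\mathcal M}^p_q({\mathbb R}^n)$.

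The first step uses the VMO hypothesis on $a_1$. Choose $a_1^{(k)}\in C_c^\infty({\mathbb R}^n)$ with $\|a_1-a_1^{(k)}\|_*\to0$ and set $T_k:=[a_1^{(k)},[a_{\{2,\dots,l\}},I_\alpha]]$. The difference $T-T_k$ is again a multilinear commutator whose leading symbol is $a_1-a_1^{(k)}$, so Proposition \ref{mlc-bdd} gives $\|T-T_k\|_{{\mathcal M}^p_q\to{\mathcal M}^s_t}\lesssim\|a_1-a_1^{(k)}\|_*\prod_{j=2}^l\|a_j\|_*\to0$. Hence $T_kf\to Tf$ in ${\mathcal M}^s_t({\mathbb R}^n)$ for each fixed $f$, and since $\widetilde{\mathcal M}^s_t({\mathbb R}^n)$ is closed it suffices to prove the range inclusion when $a_1\in C_c^\infty({\mathbb R}^n)$, which I now assume.

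For the bar part I exploit that a compactly supported smooth function is globally H\"older: $|a_1(x)-a_1(y)|\le C|x-y|^\theta$ for all $x,y$ and any fixed $\theta\in(0,1]$. Choosing $\theta\in(0,n/p-\alpha)$ (possible since $\alpha<n/p$) gives the pointwise domination
\[
|Tf(x)|\le C\int_{{\mathbb R}^n}\Big(\prod_{j=2}^l|a_j(x)-a_j(y)|\Big)\frac{|f(y)|}{|x-y|^{\,n-(\alpha+\theta)}}\,dy,
\]
whose right-hand side is the positive kernel underlying the multilinear commutator of $I_{\alpha+\theta}$ with $a_2,\dots,a_l$. The estimates behind Proposition \ref{mlc-bdd} (the case of no remaining symbol being Proposition \ref{Adams}) then yield $Tf\in{\mathcal M}^{s_0}_{t_0}({\mathbb R}^n)$ with $1/s_0=1/p-(\alpha+\theta)/n$ and $p/q=s_0/t_0$; here $s_0>s$ because $\theta>0$, while $t/s=t_0/s_0$. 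As $Tf\in{\mathcal M}^s_t({\mathbb R}^n)$ also holds, Proposition \ref{barsubsp}, applied with $(s,t)$ as the lower pair and $(s_0,t_0)$ as the higher pair, gives $Tf\in\overline{\mathcal M}{}^s_t({\mathbb R}^n)$.

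For the star part I use the compact support of $a_1$, say ${\rm supp}\,a_1\subset B(R_0)$. For $|x|>2R_0$ we have $a_1(x)=0$, so the factor $a_1(x)-a_1(y)=-a_1(y)$ confines the $y$-integration to $B(R_0)$, where $|x-y|\gtrsim|x|$. Estimating $|a_j(x)-a_j(y)|$ by the local oscillation on $B(R_0)$ plus the across-scales growth $|a_j(x)-m_{B(R_0)}(a_j)|\lesssim\|a_j\|_*\log(2+|x|)$, together with $f\in L^q(B(R_0))$, yields $|Tf(x)|\lesssim_f |x|^{-(n-\alpha)}(\log(2+|x|))^{l-1}$ for $|x|>2R_0$. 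Since $n-\alpha>n/s$ (equivalently $p>1$), a direct computation shows $\|Tf\,\chi_{{\mathbb R}^n\setminus B(R)}\|_{{\mathcal M}^s_t}\to0$ as $R\to\infty$, i.e. $Tf\in\overset{*}{\mathcal M}{}^s_t({\mathbb R}^n)$. Combining the two memberships gives $Tf\in\widetilde{\mathcal M}^s_t({\mathbb R}^n)$, which completes the argument. I expect the star estimate to be the main obstacle: one must control the product $\prod_{j=2}^l(a_j(x)-a_j(y))$ at spatial infinity, where each of the $l-1$ BMO factors contributes a logarithmic growth, and verify that the residual decay $|x|^{-(n-\alpha)}$ still overcomes the Morrey scaling so that the tail norm vanishes; the bar part is comparatively clean once the global H\"older bound converts the smooth leading symbol into a genuine gain of $\theta$ in the fractional order.
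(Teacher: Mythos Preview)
The paper does not prove this proposition; it is quoted from \cite{2025} and invoked as the base case $l=1$ of Theorem~\ref{main}, so there is no in-paper argument to compare against directly. Read for $l=1$ --- where the products $\prod_{j=2}^l$ are empty --- your proof is correct and clean: the bar step collapses to $|Tf|\le C\,I_{\alpha+\theta}|f|$, Adams' theorem places $Tf$ in ${\mathcal M}^{s_0}_{t_0}$ with $s_0>s$, and Proposition~\ref{barsubsp} finishes; the star step gives the honest pointwise decay $|Tf(x)|\lesssim_f|x|^{-(n-\alpha)}$ outside $\mathrm{supp}\,a_1$, and $n-\alpha>n/s$ makes the tail Morrey norm vanish.

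You have, however, phrased the argument for general $l$, i.e.\ you are really aiming at Theorem~\ref{main}, and there both halves have gaps. In the bar step the dominating integral $\int\prod_{j\ge2}|a_j(x)-a_j(y)|\,|f(y)|\,|x-y|^{-(n-\alpha-\theta)}\,dy$ is the \emph{absolute-value} analogue of the multilinear commutator; Proposition~\ref{mlc-bdd} as stated bounds only the signed operator, and ``the estimates behind'' it delivering the positive-kernel bound is an extra claim that must be justified. In the star step the pointwise inequality $|a_j(x)-m_{B(R_0)}(a_j)|\lesssim\|a_j\|_*\log(2+|x|)$ is false for general ${\rm BMO}$ functions (place a logarithmic singularity of $a_j$ at any prescribed large $x$); only its norm version holds. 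This is precisely why the paper, for $l\ge2$, runs the star estimate through Minkowski's inequality and the norm-level Lemma~\ref{BMO dyadic}, and handles the bar estimate via the spatial decomposition $A+A_++A_-$ together with several auxiliary exponents, rather than by a single H\"older gain. Your route is conceptually more direct and is genuinely valid at $l=1$; for $l\ge2$ these two repairs are needed, and once made the argument essentially converges to the paper's.
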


%\subsection{Lemmas for the proof}
\section{Lemmas for the proof}
In this paper, we use assumptions of Morrey norm about BMO-functions twice. Thus, we will check it in this subsection.
\begin{lemma}\label{BMO dyadic}
	If
	\begin{align*}
		1 \le q \le p < \infty, \quad g \in {\rm BMO}({\mathbb R}^n), \quad k \in \mathbb N \cup \{0\}, \quad \alpha \ge \beta > 0, \quad and \quad y \in B(0,\beta),
	\end{align*}
	then
	\begin{align*}
		&\| \chi_{B(0,2^{k+1}\alpha) \backslash B(0,2^k\alpha)} |g-g(y)| \|_{{\mathcal M}{}^p_q({\mathbb R}^n)}\\
		&\lesssim (2^{k}\alpha)^{\frac{n}{p}} \left\{ \|g\|_* \log_2\left(\frac{2^{k+1}\alpha}{\beta}\right) + |g(y)-m_{B(0,\beta)}(g)| \right\}.
	\end{align*}
\end{lemma}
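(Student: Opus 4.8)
The plan is to reduce everything to two standard facts about $\mathrm{BMO}$ functions, namely the John--Nirenberg inequality and the telescoping estimate for averages over nested balls. Write $B_0:=B(0,2^{k+1}\alpha)$, let $A:=B(0,2^{k+1}\alpha)\setminus B(0,2^k\alpha)$ be the annulus in question, and abbreviate $m_0:=m_{B_0}(g)$ and $m_\beta:=m_{B(0,\beta)}(g)$. Since $A\subset B_0$, for every $z\in A$ the triangle inequality gives
\[
	|g(z)-g(y)|\le |g(z)-m_0|+|m_0-m_\beta|+|m_\beta-g(y)|,
\]
where the last two summands do not depend on $z$. Multiplying by $\chi_A$ and taking Morrey norms, I would therefore bound
\[
	\|\chi_A|g-g(y)|\|_{{\mathcal M}^p_q}\le \|\chi_{B_0}|g-m_0|\|_{{\mathcal M}^p_q}+\bigl(|m_0-m_\beta|+|m_\beta-g(y)|\bigr)\,\|\chi_A\|_{{\mathcal M}^p_q}.
\]

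The last factor is elementary: since $\tfrac1p-\tfrac1q\le0$, for any bounded set $E$ the quantity $|B(x,r)|^{\frac1p-\frac1q}|E\cap B(x,r)|^{\frac1q}$ is largest when the ball sits inside $E$, which yields $\|\chi_E\|_{{\mathcal M}^p_q}\le|E|^{1/p}$; applied to $E=A$ this gives $\|\chi_A\|_{{\mathcal M}^p_q}\le|A|^{1/p}\lesssim(2^k\alpha)^{n/p}$. For the difference of averages I would use the telescoping bound $|m_{B(0,2r)}(g)-m_{B(0,r)}(g)|\lesssim\|g\|_*$ summed over the $\lceil\log_2(2^{k+1}\alpha/\beta)\rceil$ dyadic scales between $\beta$ and $2^{k+1}\alpha$, obtaining $|m_0-m_\beta|\lesssim\|g\|_*\log_2(2^{k+1}\alpha/\beta)$. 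The remaining term $|m_\beta-g(y)|=|g(y)-m_{B(0,\beta)}(g)|$ is exactly the quantity appearing in the statement.

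The main point, and the genuine obstacle, is the local estimate $\|\chi_{B_0}|g-m_0|\|_{{\mathcal M}^p_q}\lesssim\|g\|_*\,|B_0|^{1/p}$, where the supremum over all test balls $B(x,r)$ must be controlled. When $r\gtrsim 2^{k+1}\alpha$ I would simply use $\chi_{B_0}\le1$, integrate over $B_0$, and invoke John--Nirenberg in the form $\bigl(\int_{B_0}|g-m_0|^q\bigr)^{1/q}\lesssim\|g\|_*|B_0|^{1/q}$; since $\tfrac1p-\tfrac1q\le0$ forces $|B(x,r)|^{\frac1p-\frac1q}\lesssim|B_0|^{\frac1p-\frac1q}$, this contributes $\lesssim\|g\|_*|B_0|^{1/p}$. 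The delicate case is $r\lesssim 2^{k+1}\alpha$ with $B(x,r)\cap B_0\neq\emptyset$: here I would split $g-m_0=(g-m_{B(x,r)}(g))+(m_{B(x,r)}(g)-m_0)$, estimate the first piece again by John--Nirenberg, and control the average difference by a telescoping that costs a factor $1+\log(2^{k+1}\alpha/r)$. This logarithm is the crux, but it is defeated by the scale: the resulting bound $\|g\|_*\,r^{n/p}\bigl(1+\log(2^{k+1}\alpha/r)\bigr)$ is $\lesssim\|g\|_*(2^{k+1}\alpha)^{n/p}$ because $u\mapsto u^{-n/p}(1+\log u)$ is bounded on $[1,\infty)$. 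Hence both cases yield $\|\chi_{B_0}|g-m_0|\|_{{\mathcal M}^p_q}\lesssim\|g\|_*(2^k\alpha)^{n/p}$.

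Collecting the three bounds produces
\[
	\|\chi_A|g-g(y)|\|_{{\mathcal M}^p_q}\lesssim (2^k\alpha)^{n/p}\Bigl(\|g\|_*+\|g\|_*\log_2\tfrac{2^{k+1}\alpha}{\beta}+|g(y)-m_{B(0,\beta)}(g)|\Bigr).
\]
Finally, since $\alpha\ge\beta$ and $k\ge0$ force $2^{k+1}\alpha/\beta\ge2$, one has $\log_2(2^{k+1}\alpha/\beta)\ge1$, so the stray term $\|g\|_*$ is absorbed into the logarithmic term, giving precisely the claimed inequality. I expect the whole argument to be routine once the local estimate in the third paragraph is in place, that being the only step where the full Morrey supremum interacts nontrivially with the $\mathrm{BMO}$ oscillation.
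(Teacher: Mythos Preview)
Your argument is correct, but you have made the ``main point'' harder than it needs to be, and the paper's proof shows how. The paper's first move is simply
\[
\|\chi_A|g-g(y)|\|_{{\mathcal M}^p_q}\ \le\ \|\chi_A|g-g(y)|\|_{L^p},
\]
using the general inequality $\|f\|_{{\mathcal M}^p_q}\le\|f\|_{{\mathcal M}^p_p}=\|f\|_{L^p}$ recalled in the introduction. Once everything is in $L^p$ there is no supremum over test balls to worry about: your delicate case analysis for $r\lesssim 2^{k+1}\alpha$ (splitting $g-m_0$, telescoping to pick up a $\log(2^{k+1}\alpha/r)$, and then killing that logarithm with the factor $r^{n/p}$) is replaced by a single application of John--Nirenberg on the fixed ball. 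In particular your ``genuine obstacle'' estimate $\|\chi_{B_0}|g-m_0|\|_{{\mathcal M}^p_q}\lesssim\|g\|_*|B_0|^{1/p}$ is a one-liner: bound by the $L^p$ norm and apply John--Nirenberg on $B_0$.

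Apart from that, the two proofs are structurally the same: triangle inequality, John--Nirenberg for the oscillating piece, and a dyadic telescope between scales $\beta$ and $2^{k+1}\alpha$ producing the $\log_2(2^{k+1}\alpha/\beta)$ factor. The paper telescopes along the balls $2^mB(0,\beta)$, $m=0,\dots,J$, with $2^J\beta\sim 2^{k+1}\alpha$, whereas you telescope from $m_{B(0,\beta)}$ to $m_{B_0}$; these are equivalent choices. What your route buys is a self-contained Morrey-norm estimate that never leaves ${\mathcal M}^p_q$, at the cost of the extra paragraph; what the paper's route buys is brevity, by exploiting that the annulus has bounded measure so $L^p$ already dominates.
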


\begin{proof}
Let $J \in \mathbb N$ is the smallest number which satisfies
	\[
		B(0,2^{k+1}\alpha) \backslash B(0,2^k\alpha) \subset 2^JB(0,\beta).
	\] 
Therefore, $J$ satisfies  
	\begin{align}
		2^J \beta \sim 2^k \alpha, \quad and \quad J \sim  \log_2\left(\frac{2^{k+1}\alpha}{\beta} \right). \label{123}
	\end{align}
By the triangle inequality, we have 
	\begin{align*}
		&\| \chi_{B(0,2^{k+1}\alpha) \backslash B(0,2^k\alpha)} |g-g(y)| \|_{{\mathcal M}{}^p_q({\mathbb R}^n)}\\
		&\le \| \chi_{B(0,2^{k+1}\alpha) \backslash B(0,2^k\alpha)} |g-g(y)| \|_{L^p({\mathbb R}^n)}\\
		&\le \| \chi_{B(0,2^{k+1}\alpha) \backslash B(0,2^k\alpha)} |g(y)-m_{B(0,\beta)}(g)| \|_{L^p({\mathbb R}^n)}\\
			&\quad+ \| \chi_{B(0,2^{k+1}\alpha) \backslash B(0,2^k\alpha)} |g-m_{B(0,\beta)}(g)| \|_{L^p({\mathbb R}^n)}\\
		&\le \| \chi_{B(0,2^{k+1}\alpha) \backslash B(0,2^k\alpha)} |g(y)-m_{B(0,\beta)}(g)| \|_{L^p({\mathbb R}^n)}\\
			& \quad + \| \chi_{B(0,2^{k+1}\alpha) \backslash B(0,2^k\alpha)} |g-m_{2^JB(0,\beta)}(g)| \|_{L^p({\mathbb R}^n)} \\
			& \quad + \sum_{m=1}^{J} \| \chi_{B(0,2^{k+1}\alpha) \backslash B(0,2^k\alpha)} |m_{2^mB(0,\beta)}(g)-m_{2^{m-1}B(0,\beta)}(g)| \|_{L^p({\mathbb R}^n)}.
	\end{align*}
By the inequality $\|a_j\|_* \sim \sup\limits_{(x,r) \in {\mathbb R}^{n} \times \mathbb{R}_+} |B(x,r)|^{-\frac{1}{p} } \left\| \chi_{B(x,r)}|a_j-m_{B(x,r)}(a_j)| \right\|_{L^p({\mathbb R}^n)}$, we get
	\begin{align*}
		&\| \chi_{B(0,2^{k+1}\alpha) \backslash B(0,2^k\alpha)} |g-g(y)| \|_{{\mathcal M}{}^p_q({\mathbb R}^n)}\\
		&\lesssim (2^{k+1}\alpha)^{\frac{n}{p}} |g(y)-m_{B(0,\beta)}(g)| + (2^J\beta)^{\frac{n}{p}} \|g\|_*\\
			& \quad + \sum_{m=1}^{J} \| \chi_{B(0,2^{k+1}\alpha) \backslash B(0,2^k\alpha)} |m_{2^mB(0,\beta)}(g)-m_{2^{m-1}B(0,\beta)}(g)| \|_{L^p({\mathbb R}^n)}.\\
	\end{align*}
%	where $J \in \mathbb N$ is the smallest number which stands 
%	\[
%		B(0,2^{k+1}\alpha) \backslash B(0,2^k\alpha) \subset 2^JB(0,\beta).
%	\] 
%	Therefore, $J$ satisfies  
%	\[
%		J \sim  \log_2\left(\frac{2^{k+1}\alpha}{\beta} \right).
%	\]
	Thus from (\ref{123}), we obtain 
	\begin{align*}
		&\| \chi_{B(0,2^{k+1}\alpha) \backslash B(0,2^k\alpha)} |g-g(y)| \|_{{\mathcal M}{}^p_q({\mathbb R}^n)}\\
		&\lesssim (2^{k+1}\alpha)^{\frac{n}{p}} |g(y)-m_{B(0,\beta)}(g)| + (2^J\beta)^{\frac{n}{p}} \|g\|_* + \sum_{m=1}^{J}(2^k\alpha)^{\frac{n}{p}} \|g\|_* \\
		&\sim (2^k\alpha)^{\frac{n}{p}} \left\{ \|g\|_* \cdot J + |g(y) - m_{B(0,\beta)}(g)| \right\}\\
		&\sim (2^k\alpha)^{\frac{n}{p}} \left\{ \|g\|_* \log_2 \left(\frac{2^{k+1}\alpha}{\beta} \right) + |g(y) - m_{B(0,\beta)}(g)| \right\},
	\end{align*}
	as required.
\end{proof}

Also, combining Propositions in Section 2, we immediately acquire the following Lemma; 

\begin{lemma}\label{apr}
Let $l \in \mathbb N$, $j=1,...,l$, $a_j \in {\rm BMO}({\mathbb R}^n)$, $1 < q \le p < \infty$, $0 < \alpha < n$, $\frac{1}{s}=\frac{1}{p}-\frac{\alpha}{n}$ and $\frac{p}{q}=\frac{s}{t}$. If
	\begin{align*}
		\forall a_1 \in C^\infty_{\rm c}({\mathbb R}^n), \quad [a_{\{1, ... , l\}},I_{\alpha}]: {\mathcal M}^{p}_q({\mathbb R}^n) \rightarrow \widetilde{\mathcal M}^{s}_t({\mathbb R}^n):bounded
	\end{align*}
then, 
	\begin{align*}
		\forall a_1 \in {\rm VMO}({\mathbb R}^n), \quad [a_{\{1, ... , l\}},I_{\alpha}]: {\mathcal M}^{p}_q({\mathbb R}^n) \rightarrow \widetilde{\mathcal M}^{s}_t({\mathbb R}^n):compact.
	\end{align*}

\end{lemma}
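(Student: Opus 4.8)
The plan is to combine three ingredients: the boundedness hypothesis for smooth symbols, the multilinear operator-norm estimate of Proposition~\ref{mlc-bdd}, and the compactness result of Sawano and Shirai into the ambient space $\mathcal{M}^s_t(\mathbb{R}^n)$. The guiding principle is that compactness of an operator into a Banach space is automatically inherited by any closed subspace containing the range; consequently the whole task reduces to upgrading the \emph{target} space from $\mathcal{M}^s_t(\mathbb{R}^n)$ to its closed subspace $\widetilde{\mathcal{M}}^s_t(\mathbb{R}^n)$, and the compactness itself comes for free from the existing theorem.

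First I would record a multilinear difference estimate. Fixing $a_2,\dots,a_l \in {\rm BMO}(\mathbb{R}^n)$ and varying only the first symbol, I note that the kernel factor $\prod_{j=1}^l(a_j(x)-a_j(y))$ is multilinear in $(a_1,\dots,a_l)$, so replacing $a_1$ by $a_1'$ gives
\begin{equation*}
	[a_{\{1,\dots,l\}},I_\alpha] - [a'_{\{1,\dots,l\}},I_\alpha]
	= [(a_1-a_1',a_2,\dots,a_l)_{\{1,\dots,l\}},I_\alpha].
\end{equation*}
Applying Proposition~\ref{mlc-bdd} to the right-hand side yields
\begin{equation*}
	\bigl\| [a_{\{1,\dots,l\}},I_\alpha] - [a'_{\{1,\dots,l\}},I_\alpha] \bigr\|_{\mathcal{M}^p_q \to \mathcal{M}^s_t}
	\lesssim \|a_1-a_1'\|_* \prod_{j=2}^l \|a_j\|_*,
\end{equation*}
so that $a_1 \mapsto [a_{\{1,\dots,l\}},I_\alpha]$ is Lipschitz from ${\rm BMO}(\mathbb{R}^n)$ into the bounded operators $\mathcal{M}^p_q(\mathbb{R}^n)\to\mathcal{M}^s_t(\mathbb{R}^n)$.

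Next I would transfer the range condition from smooth symbols to VMO symbols. Let $a_1 \in {\rm VMO}(\mathbb{R}^n)$ and pick $a_1^{(k)} \in C_c^\infty(\mathbb{R}^n)$ with $\|a_1-a_1^{(k)}\|_* \to 0$, available by the definition of VMO as the BMO-closure of $C_c^\infty(\mathbb{R}^n)$. For a fixed $f \in \mathcal{M}^p_q(\mathbb{R}^n)$, the hypothesis gives $[a^{(k)}_{\{1,\dots,l\}},I_\alpha]f \in \widetilde{\mathcal{M}}^s_t(\mathbb{R}^n)$, while the difference estimate shows $[a^{(k)}_{\{1,\dots,l\}},I_\alpha]f \to [a_{\{1,\dots,l\}},I_\alpha]f$ in the norm of $\mathcal{M}^s_t(\mathbb{R}^n)$. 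Since $\widetilde{\mathcal{M}}^s_t(\mathbb{R}^n)$ is a closed subspace of $\mathcal{M}^s_t(\mathbb{R}^n)$, the limit again lies in $\widetilde{\mathcal{M}}^s_t(\mathbb{R}^n)$; hence $[a_{\{1,\dots,l\}},I_\alpha]$ maps $\mathcal{M}^p_q(\mathbb{R}^n)$ into $\widetilde{\mathcal{M}}^s_t(\mathbb{R}^n)$ for every $a_1 \in {\rm VMO}(\mathbb{R}^n)$.

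Finally I would invoke the compactness of Sawano and Shirai: for $a_1 \in {\rm VMO}(\mathbb{R}^n)$ the operator $[a_{\{1,\dots,l\}},I_\alpha]\colon \mathcal{M}^p_q(\mathbb{R}^n)\to\mathcal{M}^s_t(\mathbb{R}^n)$ is compact. Together with the range inclusion just established, compactness into the subspace is immediate: the image of any bounded set is precompact in $\mathcal{M}^s_t(\mathbb{R}^n)$ and contained in the closed subspace $\widetilde{\mathcal{M}}^s_t(\mathbb{R}^n)$, hence precompact there. I expect the only delicate point to be the range-transfer step, where one must ensure genuine norm convergence of the operators rather than mere pointwise convergence; this is precisely what the Lipschitz estimate of the second paragraph supplies, so no serious obstacle remains.
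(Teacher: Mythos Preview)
Your proof is correct and uses the same three ingredients as the paper: the multilinear operator-norm estimate (Proposition~\ref{mlc-bdd}), the Sawano--Shirai compactness into $\mathcal{M}^s_t(\mathbb{R}^n)$, and the closedness of $\widetilde{\mathcal{M}}^s_t(\mathbb{R}^n)$ in $\mathcal{M}^s_t(\mathbb{R}^n)$. The only organizational difference is that the paper first upgrades the \emph{approximants} $[a_{\{1_m,2,\dots,l\}},I_\alpha]$ to compact operators into $\widetilde{\mathcal{M}}^s_t(\mathbb{R}^n)$ and then passes to the norm limit, whereas you first push the \emph{range} of the limiting operator into $\widetilde{\mathcal{M}}^s_t(\mathbb{R}^n)$ and then invoke Sawano--Shirai directly for the VMO symbol; your ordering is arguably cleaner since it skips the ``norm-limit of compact operators is compact'' step, but the substance is identical.
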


\begin{proof}
Let $a_1 \in {\rm VMO}({\mathbb R}^n)$. Then we can find a sequence $\{ a_{1_m}\}_{m=1}^{\infty} \subset C^\infty_{\rm c}({\mathbb R}^n)$ such that $\|a_1-a_{1_m} \|_* \leq m^{-1}$ holds. The compactness of $[a_{\{1_l,2, ... , l\}},I_{\alpha}]$ immediately follows from the natural embedding $\widetilde{\mathcal M}^{s}_t({\mathbb R}^n) \hookrightarrow {\mathcal M}^{s}_t({\mathbb R}^n)$. By Proposition \ref{mlc-bdd}, we deduce 
	\begin{align*}
		&\| [a_{\{1, ... , l\}},I_{\alpha}] - [a_{\{1_m,2, ... , l\}},I_{\alpha}] \|_{{\mathcal M}^p_q({\mathbb R}^n) \rightarrow {\mathcal M}^s_t({\mathbb R}^n)} \\
		&= \| [a_1-a_{1_m}, [a_{\{2, ... , l\}},I_{\alpha}]] \|_{{\mathcal M}^p_q({\mathbb R}^n) \rightarrow {\mathcal M}^s_t({\mathbb R}^n)} \\
		&\leq C\|a_1-a_{1_m} \|_* \prod_{j=2}^l \|a_j\|_*\\
		&\le C m^{-1} \prod_{j=2}^l \|a_j\|_*,
	\end{align*}
where $C$ is independent of $a_1$. \\
Thus, once we establish the boundedness of $[a_{\{1_m,2, ... , l\}},I_{\alpha}]$, then the compactness of $[a_{\{1, ... , l\}},I_{\alpha}]$  immediately follows; the norm-limit of compact operators is compact. 

\end{proof}

\section{Proof of Theorem \ref{main}}
If $l=1$, then the desired result follows from Proposition \ref{l1}. Thus, suppose  $l \ge 2$.\\
%Let $a_1 \in {\rm VMO}({\mathbb R}^n)$. Then we can find a sequence $\{ a_{1_m}\}_{m=1}^{\infty} \subset C^\infty_{\rm c}({\mathbb R}^n)$ such that $\|a_1-a_{1_m} \|_* \leq m^{-1}$ holds. By Proposition \ref{mlc-bdd}, we deduce 
%	\begin{align*}
%		&\| [a_{\{1, ... , l\}},I_{\alpha}] - [a_{\{1_m,2, ... , l\}},I_{\alpha}] \|_{{\mathcal M}^p_q({\mathbb R}^n) \rightarrow {\mathcal M}^s_t({\mathbb R}^n)} \\
%		&= \| [a_1-a_{1_m}, [a_{\{2, ... , l\}},I_{\alpha}]] \|_{{\mathcal M}^p_q({\mathbb R}^n) \rightarrow {\mathcal M}^s_t({\mathbb R}^n)} \\
%		&\leq C\|a_1-a_{1_m} \|_* \prod_{j=2}^l \|a_j\|_*\\
%		&\leq C m^{-1},
%	\end{align*}
%where $C$ is independent of $a_1$. \\
%Thus, once we establish the compactness of $[a_{\{1_m,2, ... , l\}},I_{\alpha}]$, then the compactness of $[a_{\{1, ... , l\}},I_{\alpha}]$  immediately follows; the norm-limit of compact operators is compact. Therefore, we may assume that $a_1 \in C^\infty_{\rm c}({\mathbb R}^n)$.\\
By Proposition \ref{decom} and Lemma \ref{apr}, we should check $[a_{\{1, ... , l\}},I_{\alpha}]f \in \overset{*}{{\mathcal M}}{}^s_t({\mathbb R}^n)$ and $[a_{\{1, ... , l\}},I_{\alpha}]f \in \overline{{\mathcal M}}{}^s_t({\mathbb R}^n)$ for any $f \in {\mathcal M}^{p}_q({\mathbb R}^n)$ and $a_1 \in C^\infty_{\rm c}({\mathbb R}^n)$.\\
Take $\beta>0$ such that ${\rm supp}(a_1) \subset B(0,\beta)=:K$. Also for $r>0$, $rK=rB(0,\beta)$ means $B(0,r\beta)$.  Fix $\epsilon \in (0,1)$ and define parameters $a,b$ as \\
\[
	\frac{1}{a}=\frac{1}{p}-\frac{\alpha+\epsilon}{n}>0, \quad and \quad \frac{p}{q}=\frac{a}{b}.
\]
%I will get $[a_{\{1, ... , l\}},I_{\alpha}]f \in \overset{*}{{\mathcal M}}{}^s_t({\mathbb R}^n)$ done first. 
They satisfy $a,b > 1$ and $a > s$. 
%First, we will show $[a_{\{1, ... , l\}},I_{\alpha}]f \in \overset{*}{{\mathcal M}}{}^s_t({\mathbb R}^n)$ for any $f \in {\mathcal M}^{p}_q({\mathbb R}^n)$.

\subsection{Proof of $[a_{\{1, ... , l\}},I_{\alpha}]f \in \overset{*}{{\mathcal M}}{}^s_t({\mathbb R}^n)$}
For $R\gg1$ and $(c,d) \in \{(s,t),(a,b)\}$, by Minkowski's inequality, we have
\begin{align*}
	&\| \chi_{{B(0,R)}^c} [a_{\{1, ... , l\}},I_{\alpha}]f \|_{{\mathcal M}^c_d({\mathbb R}^n)}\\
	&=\left\| \chi_{{B(0,R)}^c} \int_{{\mathbb R}^n} \prod_{j=2}^l \left(a_j(\cdot)-a_j(y)\right) \frac{a_1(y)f(y)}{|\cdot-y|^{n-\alpha}} dy\right\|_{{\mathcal M}^c_d({\mathbb R}^n)}\\
	&\le \int_{{\mathbb R}^n} \left\| \chi_{{B(0,R)}^c}\frac{\prod_{j=2}^l \left(a_j(\cdot)-a_j(y)\right)}{|\cdot-y|^{n-\alpha}}\right\|_{{\mathcal M}^c_d({\mathbb R}^n)} |a_1(y)f(y)|dy\\
	&\le \sum_{k=1}^{\infty}\int_{K} \left\| \chi_{\{2^{k-1}R<|\cdot|<2^kR\}}\frac{\prod_{j=2}^l \left(a_j(\cdot)-a_j(y)\right)}{|\cdot-y|^{n-\alpha}}\right\|_{{\mathcal M}^c_d({\mathbb R}^n)} |a_1(y)f(y)|dy\\
	&\lesssim_{a_1}\sum_{k=1}^{\infty} \frac{1}{(2^kR)^{n-\alpha}} \int_K \left\| \chi_{\{2^{k-1}R<|\cdot|<2^kR\}} \prod_{j=2}^l{|a_j(\cdot)-a_j(y)|}\right\|_{{\mathcal M}^c_d({\mathbb R}^n)} |f(y)|dy.\\
\end{align*}
Thus for $\frac{1}{c}=\frac{l-1}{u}$, from Lemma \ref{BMO dyadic} and H\"{o}lder's inequality, we obtain 
\begin{align*}
	&\| \chi_{{B(0,R)}^c} [a_{\{1, ... , l\}},I_{\alpha}]f \|_{{\mathcal M}^c_d({\mathbb R}^n)}\\
	&\lesssim \sum_{k=1}^{\infty} \frac{1}{(2^kR)^{n-\alpha}} \int_K \prod_{j=2}^{l} \left\| \chi_{{\{2^{k-1}R<|\cdot|<2^kR\}}} |a_j-a_j(y)| \right\|_{L^u({\mathbb R}^n)} |f(y)|dy\\
	&\lesssim \sum_{k=1}^{\infty} \frac{1}{(2^kR)^{n-\alpha}} \int_K (2^kR)^{\frac{n(l-1)}{u}}\prod_{j=2}^{l}\left\{\|a_j\|_* \log_2\left(\frac{2^kR}{\beta}\right)+|a_j(y)-m_K(a_j)|\right\} |f(y)|dy\\
    	&\le \sum_{k=1}^{\infty} \frac{(2^kR)^{\frac{n}{c}}}{(2^kR)^{n-\alpha}} \int_K \left\{\log_2\left(\frac{2^kR}{\beta}\right)\right\}^{l-1} \prod_{j=2}^{l}\Big\{\|a_j\|_* +|a_j(y)-m_K(a_j)|\Big\} |f(y)|dy\\
	&\le \sum_{k=1}^{\infty} \frac{(2^kR)^{\frac{n}{c}}}{(2^kR)^{n-\alpha}} \left\{\log_2\left(\frac{2^kR}{\beta}\right)\right\}^{l-1} \|f\|_{L^q(K)} \left\|\prod_{j=2}^{l}\{\|a_j\|_* +|a_j(y)-m_K(a_j)|\}\right\|_{L^{q'}(K)}\\
	&\lesssim_{\{a_j\},f} \sum_{k=1}^{\infty} (2^kR)^{\frac{n}{c}-n+\alpha} \left\{\log_2\left(\frac{2^kR}{\beta}\right)\right\}^{l-1},
\end{align*}
where $q'$ satisfies $1=\frac{1}{q}+\frac{1}{q'}$. 
If we take $\rho>0$ which satisfies
\[
	-\frac{n}{c}+n-\alpha > \rho > 0,
\]
then we have
\begin{align*}
	&\| \chi_{{B(0,R)}^c} [a_{\{1, ... , l\}},I_{\alpha}]f \|_{{\mathcal M}^c_d({\mathbb R}^n)}\\
	&\lesssim \sum_{k=1}^{\infty} (2^kR)^{\frac{n}{c}-n+\alpha} \left\{\left(\frac{2^kR}{\beta}\right)^{\frac{\rho}{l-1}}\right\}^{l-1}\\
	&\sim R^{\frac{n}{c}-n+\alpha+\rho}\\
	&\rightarrow 0 \quad (R \rightarrow \infty).
\end{align*}
By Proposition \ref{char}, for any $f \in {\mathcal M}^{p}_q({\mathbb R}^n)$ we get
\begin{align*}
	[a_{\{1, ... , l\}},I_{\alpha}]f \in \overset{*}{\mathcal M}{}^s_t({\mathbb R}^n).
\end{align*}
Also, we get for any $f \in {\mathcal M}^{p}_q({\mathbb R}^n)$
\begin{align}\label{A}
	\exists L > \beta \quad s.t. \quad \chi_{B(0,L)^c}[a_{\{1, ... , l\}},I_{\alpha}]f \in {\mathcal M}{}^a_b({\mathbb R}^n).
\end{align}

\subsection{Proof of $[a_{\{1, ... , l\}},I_{\alpha}]f \in \overline{{\mathcal M}}{}^s_t({\mathbb R}^n)$}
Secondly, we will show $[a_{\{1, ... , l\}},I_{\alpha}]f \in \overline{{\mathcal M}}{}^s_t({\mathbb R}^n)$. Decompose the operator $[a_{\{1, ... , l\}},I_{\alpha}]$ with $A$, $A_+$ and $A_-$ as follows:
\begin{align*}
	[a_{\{1, ... , l\}},I_{\alpha}]f=Af+A_+f+A_-f,
\end{align*}
where
\begin{equation*}
	Af(x):=\chi_{B(0,L)^c}(x) [a_{\{1, ... , l\}},I_{\alpha}]f(x),
\end{equation*}
\begin{equation*}
	A_+f(x):=\chi_{2K}(x) [a_{\{1, ... , l\}},I_{\alpha}]f(x),
\end{equation*}
\begin{equation*}	
	A_-f(x):=\chi_{(2K)^c}(x) \chi_{B(0,L)}(x) [a_{\{1, ... , l\}},I_{\alpha}]f(x).
\end{equation*}
From (\ref{A}), we immediately obtain $Af \in \overline{{\mathcal M}}{}^s_t({\mathbb R}^n)$.

To use Proposition \ref{barsubsp}, we will show $A_-f \in {\mathcal M}{}^a_b({\mathbb R}^n)$ and $A_+f \in {\mathcal M}{}^{a'}_{b'}({\mathbb R}^n)$, where $a',b'$ are suitable parameters.

Set the parameters $d', \delta, \theta, \gamma$ as $\frac{1}{a}=\frac{l-1}{d'}$, $\delta= {\rm max}(\frac{q}{1+q}n-\alpha,0)$, $\theta=\frac{q+1}{2} > 1$ and $\gamma=n-\theta(n-\alpha-\delta)$. Then we have $|f|^{\theta} \in {\mathcal M}^{\frac{p}{\theta}}_{\frac{q}{\theta}}({\mathbb R}^n)$. Find $\xi \in \mathbb R^n$ such that $|\xi| \ll 1$ and $I_{\gamma}(|f|^{\theta})(\xi) < \infty$. 
Recalling Lemma \ref{BMO dyadic} and Minkowski's inequality, we get 
\begin{align*}
	&\|A_-f\|_{{\mathcal M}^{a}_{b}({\mathbb R}^n)}\\
	&= \left\| \chi_{\{2\beta \le |\cdot| \le L\}} \int_K \frac{\prod_{j=1}^l (a_j-a_j(y))}{|\cdot - y|^{n-\alpha}}f(y)dy \right\|_{{\mathcal M}^{a}_{b}({\mathbb R}^n)}\\	
	&\lesssim_{a_1} \int_{K}\left\| \chi_{\{2\beta \le |\cdot| \le L\}} \frac{\prod_{j=2}^{l}|a_j-a_j(y)|}{|\cdot-y|^{n-\alpha}} \right\|_{{\mathcal M}^{a}_{b}({\mathbb R}^n)} |f(y)|dy\\
	&\le \sum_{k=1}^{[\log_2{\frac{L}{\beta}}]+1}\int_{K}\left\| \chi_{2^{k+1}K \backslash 2^kK} \frac{\prod_{j=2}^{l}|a_j-a_j(y)|}{|\cdot-y|^{n-\alpha}} \right\|_{{\mathcal M}^{a}_{b}({\mathbb R}^n)} |f(y)|dy.
\end{align*}
Since $|\cdot - y|^{\delta} \ge {\beta}^{\delta}$ and $|\cdot-y| \sim |\xi-y|$ for $1 \le k \le [\log_2{\frac{L}{\beta}}]+1$, we obtain
\begin{align*}
	&\|A_-f\|_{{\mathcal M}^{a}_{b}({\mathbb R}^n)}\\
	&\lesssim \sum_{k=1}^{[\log_2{\frac{L}{\beta}}]+1}\int_{K}\left\| \chi_{2^{k+1}K \backslash 2^kK} \frac{\prod_{j=2}^{l}|a_j-a_j(y)|}{|\cdot-y|^{n-\alpha-\delta}} \right\|_{{\mathcal M}^{a}_{b}({\mathbb R}^n)} |f(y)|dy\\
	&\lesssim \sum_{k=1}^{[\log_2{\frac{L}{\beta}}]+1}\int_{K}\prod_{j=2}^{l} \left\| \chi_{2^{k+1}K \backslash 2^kK} |a_j-a_j(y)| \right\|_{L^{d'}{(\mathbb R}^n)} \frac{|f(y)|}{|\xi-y|^{n-\alpha-\delta}}dy\\
\end{align*}
Due to Lemma \ref{BMO dyadic} and H\"{older's inequality}, we obtain
\begin{align*}
	&\|A_-f\|_{{\mathcal M}^{a}_{b}({\mathbb R}^n)}\\
	&\lesssim \sum_{k=1}^{[\log_2{\frac{L}{\beta}}]+1} \int_{K} \prod_{j=2}^{l} \Big\{2^{\frac{(k+1)n}{d'}}\|a_j\|_*+|a_j(y)-m_{K}(a_j)| \Big\}\frac{|f(y)|}{|\xi-y|^{n-\alpha-\delta}}dy\\
	&\lesssim_{\{a_j\}} \sum_{k=1}^{[\log_2{\frac{L}{\beta}}]+1} \sum_{B \subset \{2,...,l\}} \int_{K} \prod_{j \in B} |a_j(y)-m_{K}(a_j)| \frac{|f(y)|}{|\xi-y|^{n-\alpha-\delta}}dy\\
	&\lesssim \sum_{B \subset \{2,...,l\}} \left\| \prod_{j \in B}  |a_j-m_{K}(a_j)| \right\|_{L^{\frac{\theta}{\theta-1}}(K)} \times \left(\int_{K} \frac{|f(y)|^{\theta}}{|\xi-y|^{\theta(n-\alpha-\delta)}} dy \right)^{\frac{1}{\theta}}\\
	&\lesssim_{\{a_j\}} \left( I_{\gamma}(|f|^{\theta})(\xi) \right)^{\frac{1}{\theta}}\\
	&< \infty.
\end{align*}
%The last inequality is due to Proposition \ref{Adams} and $|f|^{\theta} \in {\mathcal M}^{\frac{p}{\theta}}_{\frac{q}{\theta}}({\mathbb R}^n)$.
By Proposition \ref{barsubsp}, we get 
\begin{align*}
	A_-f \in \overline{{\mathcal M}}{}^s_t({\mathbb R}^n).
\end{align*}

To prove $A_+f \in {\mathcal M}{}^{a'}_{b'}({\mathbb R}^n)$, we set
\begin{align*}
	A_+f(x)&={\rm I}+{\rm II},
\end{align*}
where
\begin{align*}
	{\rm I}:&=\chi_{2K}(x)\int_{y \in 2K} \prod_{j=1}^l(a_j(x)-a_j(y))\frac{f(y)}{|x-y|^{n-\alpha}}dy,\\
	{\rm II}:&=\chi_{2K}(x)\int_{y \notin 2K} \prod_{j=1}^l(a_j(x)-a_j(y))\frac{f(y)}{|x-y|^{n-\alpha}}dy.
\end{align*}
Since $a_1 \in C^\infty_{\rm c}({\mathbb R}^n)$, by the mean value theorem and the fact that $a \in L^{\infty}({\mathbb R}^n)$ we get for any $0<\lambda<1$,
\[
	|a_1(x)-a_1(y)| \lesssim |x-y|^{\lambda}.
\]
Then, 
\begin{align*}
	|{\rm I}|&\lesssim \chi_{2K}(x)\int_{y \in 2K} \prod_{j=2}^l|a_j(x)-a_j(y)|\frac{|f(y)|}{|x-y|^{n-\alpha-\frac{\epsilon}{2}}}dy\\
	&\le \chi_{2K}(x)\int_{y \in 2K} \prod_{j=2}^l \left\{|a_j(x)|+|a_j(y)|\right\}\frac{|f(y)|}{|x-y|^{n-\alpha-\frac{\epsilon}{2}}}dy\\
	&= \chi_{2K}(x) \sum_{B \subset \{2,...,l\}}\prod_{j \in B}|a_j(x)| \int_{y \in 2K} \prod_{j \notin B}|a_j(y)|\frac{|f(y)|}{|x-y|^{n-\alpha-\frac{\epsilon}{2}}}dy.
\end{align*}
As a model case, we will consider
\[
	\chi_{2K}(x) \prod_{j=2}^m|a_j(x)| \int_{y \in 2K} \prod_{j=m+1}^l|a_j(y)|\frac{|f(y)|}{|x-y|^{n-\alpha-\frac{\epsilon}{2}}}dy
\]
 for any $1 \le m \le l$. Here, it will be understood that $\prod_{j=2}^{1}|a_j(x)|=\prod_{j=l+1}^{l}|a_j(x)|=1$. 
Define $a',a'',b'$ by 
\begin{align*}
	\frac{1}{a'}=\frac{1}{p}-\frac{\alpha+\frac{\epsilon}{3}}{n},\quad \frac{1}{a''}=\frac{1}{p}-\frac{\alpha+\frac{\epsilon}{2}}{n}, \quad {\rm and} \quad  \frac{p}{q}=\frac{a'}{b'}.
\end{align*}
Then, $a', a'', a$ satisfy
\[
	a' < a'' < a.
\]
Now, take $\tilde{a} \in (a',a'')$ with $0 < \frac{1}{\tilde{a}}+\frac{\alpha+\frac{\epsilon}{2}}{n} < 1$ and define $\tilde{p}$, $\tilde{q}$, $\tilde{b}$, and $r, w$ by
\begin{align*}
	\frac{1}{\tilde{a}}=\frac{1}{\tilde{p}}-\frac{\alpha+\frac{\epsilon}{2}}{n}, \quad& \frac{p}{q}=\frac{\tilde{a}}{\tilde{b}}=\frac{\tilde{p}}{\tilde{q}}, \\ 
	\frac{1}{a'}=\frac{m-1}{r}+\frac{1}{\tilde{a}}, \quad {\rm and}& \quad \frac{1}{\tilde{p}}=\frac{l-m}{w}+\frac{1}{p}.
\end{align*}
Then, we have $\tilde{p} < p$. Thus we get
\begin{align*}
    	&\left\| \chi_{2K}(\cdot) \prod_{j=2}^m|a_j(\cdot)| \int_{y \in 2K} \prod_{j=m+1}^l|a_j(y)|\frac{|f(y)|}{|\cdot-y|^{n-\alpha-\frac{\epsilon}{2}}}dy\right\|_{{\mathcal M}^{a'}_{b'}({\mathbb R}^n)}\\
	&\le \prod_{j=2}^{m} \left\| \chi_{2K}(\cdot) a_j\right\|_{L^r({\mathbb R}^n)} \left\| \chi_{2K}(\cdot) \int_{y \in {\mathbb R}^n} \chi_{2K}(y) \prod_{j=m+1}^l |a_j(y)|\frac{|f(y)|}{|\cdot-y|^{n-\alpha-\frac{\epsilon}{2}}}dy\right\|_{{\mathcal M}^{\tilde{a}}_{\tilde{b}}({\mathbb R}^n)}\\
	&\lesssim_{\{a_j\}} \left\| I_{\alpha+\frac{\epsilon}{2}}\left(\chi_{2K}(\prod_{j=m+1}^{l}|a_j|)|f|\right) \right\|_{{\mathcal M}^{\tilde{a}}_{\tilde{b}}({\mathbb R}^n)}\\
	&\lesssim \left\| \chi_{2K}\left(\prod_{j=m+1}^{l}|a_j|\right)|f| \right\|_{{\mathcal M}^{\tilde{p}}_{\tilde{q}}({\mathbb R}^n)}\\
	&\le \prod_{j=m+1}^{l} \|a_j\|_{L^w(2K)} \|f\|_{{\mathcal M}^{p}_{q}({\mathbb R}^n)}\\
	&< \infty.
\end{align*}
Hence, we get
\[
	{\rm I} \in {\mathcal M}^{a'}_{b'}({\mathbb R}^n).
\]
%and therefore by Proposition \ref{barsubsp}, we obtain
%\[
%	{\rm I} \in \overline{{\mathcal M}}{}^s_t({\mathbb R}^n).
%\]
On the other hand, using Minkowski's inequality, we have
\begin{align*}
	&\|{\rm II}\|_{{\mathcal M}^{a'}_{b'}({\mathbb R}^n)}\\
	&=\left\| \chi_{2K}(\cdot) a_1(\cdot) \int_{y \notin 2K} \prod_{j=2}^{l}(a_j(\cdot)-a_j(y)) \frac{f(y)}{|\cdot - y|^{n-\alpha}}dy \right\|_{{\mathcal M}^{a'}_{b'}({\mathbb R}^n)}\\
	&=\left\| \chi_K(\cdot)a_1(\cdot) \sum_{k=1}^{\infty}  \int_{2^{k+1}K \backslash 2^kK} \prod_{j=2}^{l}(a_j(\cdot)-a_j(y)) \frac{f(y)}{|\cdot - y|^{n-\alpha}}dy \right\|_{{\mathcal M}^{a'}_{b'}({\mathbb R}^n)}\\
	&\lesssim_{a_1} \sum_{k=1}^{\infty} \int_{2^{k+1}K \backslash 2^kK} \left\| \chi_K(\cdot) \frac{ \prod_{j=2}^{l}(a_j(\cdot)-a_j(y))}{|\cdot - y|^{n-\alpha}}\right\|_{{\mathcal M}^{a'}_{b'}({\mathbb R}^n)} |f(y)|dy.
\end{align*}
Thus, we concentrate on estimating the following;   
\[
	{\rm II}_k:=\left\| \chi_K(\cdot) \frac{ \prod_{j=2}^{l}(a_j(\cdot)-a_j(y))}{|\cdot - y|^{n-\alpha}}\right\|_{{\mathcal M}^{a'}_{b'}({\mathbb R}^n)}, y \in 2^{k+1}K \backslash 2^kK.
\]
Since we have $|\cdot - y| \sim 2^k\beta$, for $\frac{1}{a'}=\frac{l-1}{c'}$ we obtain
\begin{align*}
	{\rm II}_k&\sim \left\| \chi_K \prod_{j=2}^{l}|a_j(\cdot)-a_j(y)| \right\|_{{\mathcal M}^{a'}_{b'}({\mathbb R}^n)}\times \frac{1}{(2^k\beta)^{n-\alpha}}\\
%	&\le \prod_{j=2}^{l} \left\| \chi_K |a_j-a_j(y)| \right\|_{{\mathcal M}^{c}_{d}({\mathbb R}^n)} \times \frac{1}{|2^k\beta|^{n-\alpha}}\\
	&\le \prod_{j=2}^{l} \left\| \chi_K |a_j-a_j(y)| \right\|_{L^{c'}({\mathbb R}^n)} \times \frac{1}{(2^k\beta)^{n-\alpha}}.
\end{align*}
Using the triangle inequality, we have 
\begin{align*}
	&\left\| a_j-a_j(y) \right\|_{L^{c'}(K)}\\
	&=\left\| a_j - m_K(a_j) + \sum_{m=0}^{k}(m_{2^mK}(a_j)-m_{2^{m+1}K}(a_j)) + m_{2^{k+1}K}(a_j)-a_j(y) \right\|_{L^{c'}(K)}\\
%	&=\left\| a_j - m_K(a_j)+m_K(a_j) - m_{2K}(a_j)+m_{2K}(a_j)-...-m_{2^{k+1}K}(a_j)+m_{2^{k+1}K}(a_j)-a_j(y)\right\|_{L^{c'}(K)}\\
	&\le \left\|a_j-m_K(a_j)\right\|_{L^{c'}(K)} + \left\|m_{2^{k+1}K}(a_j)-a_j(y)\right\|_{L^{c'}(K)}\\
		&\quad + \sum_{m=0}^{k} \left\|m_{2^mK}(a_j)-m_{2^{m+1}K}(a_j) \right\|_{L^{c'}(K)}\\
	&\le |K|^{\frac{1}{c'}} \|a_j\|_* + |K|^{\frac{1}{c'}} |m_{2^{k+1}K}(a_j)-a_j(y)| + \sum_{m=0}^{k} 2^{n+1} |K|^{\frac{1}{c'}} \|a_j\|_*\\
	&\lesssim k\|a_j\|_* + |m_{2^{k+1}K}(a_j)-a_j(y)|.
\end{align*}
Combining these inequalities and using H\"{o}lder's inequality, we get
\begin{align*}
	&\|{\rm II}\|_{{\mathcal M}^{a'}_{b'}({\mathbb R}^n)}\\
	&\lesssim \sum_{k=1}^{\infty} \int_{2^{k+1}K \backslash 2^kK} {\rm II}_k |f(y)|dy\\
	&\lesssim \sum_{k=1}^{\infty} \int_{2^{k+1}K \backslash 2^kK} \prod_{j=2}^{l} \Big\{ k\|a_j\|_* + |m_{2^{k+1}K}(a_j)-a_j(y)| \Big\} \frac{|f(y)|}{(2^k\beta)^{n-\alpha}}dy\\
	&\lesssim \sum_{k=1}^{\infty} \frac{1}{2^{k(n-\alpha)}} \|f\|_{L^{\eta}(2^{k+1}K \backslash 2^kK)} \prod_{j=2}^{l} \Big\| k\|a_j\|_* + |m_{2^{k+1}K}(a_j)-a_j| \Big\|_{L^{\mu}(2^{k+1}K \backslash 2^kK)},
\end{align*}
where
\[
	1<\eta<q, \quad {\rm and} \quad 1=\frac{1}{\eta}+\frac{l-1}{\mu}.
\]
Also, by triangle inequality we get
\begin{align*}
	&\Big\| k\|a_j\|_* + |m_{2^{k+1}K}(a_j)-a_j| \Big\|_{L^{\mu}(2^{k+1}K \backslash 2^kK)}\\
	&\lesssim k\|a_j\|_* (2^k\beta)^{\frac{n}{\mu}} + (2^k\beta)^{\frac{n}{\mu}} \|a_j\|_* \\
	&\lesssim \|a_j\|_*2^{\frac{kn}{\mu}}k.
\end{align*}
Thus,
\begin{align*}
	&\|{\rm II}\|_{{\mathcal M}^{a'}_{b'}({\mathbb R}^n)}\\
	&\lesssim \sum_{k=1}^{\infty} \frac{1}{2^{k(n-\alpha)}} \|f\|_{L^{\eta}(2^{k+1}K \backslash 2^kK)} \prod_{j=2}^{l}\{ \|a_j\|_*k2^{\frac{kn}{\mu}} \}\\
	&\sim_{\{a_j\}} \sum_{k=1}^{\infty} \frac{1}{2^{k(n-\alpha)}} \|f\|_{L^{\eta}(2^{k+1}K \backslash 2^kK)} k^{l-1} 2^{\frac{(l-1)kn}{\mu}}\\
	&\lesssim \sum_{k=1}^{\infty} |2^{k+1}K|^{\frac{1}{p}-\frac{1}{\eta}} \|f\|_{L^{\eta}(2^{k+1}K)} \frac{1}{2^{k(n-\alpha)}} k^{l-1} 2^{\frac{(l-1)kn}{\mu}} (2^{kn})^{\frac{1}{\eta}-\frac{1}{p}}\\
	& \le \sum_{k=1}^{\infty} \|f\|_{{\mathcal M}^{p}_{\eta}({\mathbb R}^n)} k^{l-1} 2^{k(\alpha - \frac{n}{p})}\\
	&\lesssim \|f\|_{{\mathcal M}^{p}_{\eta}({\mathbb R}^n)}\\
	&\le \|f\|_{{\mathcal M}^{p}_{q}({\mathbb R}^n)}.
\end{align*}
Therefore, we get ${\rm II} \in {\mathcal M}{}^{a'}_{b'}({\mathbb R}^n)$. Hence, by Proposition \ref{barsubsp}, we now have $A_+f \in \overline{{\mathcal M}}{}^s_t({\mathbb R}^n)$. %and $Af \in \overline{{\mathcal M}}{}^s_t({\mathbb R}^n)$.
%Thus, the last thing which we should show is $A_-f \in \overline{{\mathcal M}}{}^s_t({\mathbb R}^n)$. %Remind that $\chi_{B(0,L)^c}[a_{\{1, ... , l\}},I_{\alpha}]f \in {\mathcal M}{}^a_b({\mathbb R}^n)$. Consequently, we concentrate on showing $A_-f \in {\mathcal M}{}^a_b({\mathbb R}^n)$. 

Since $A_+f, A_-f, Af \in \overline{{\mathcal M}}{}^s_t({\mathbb R}^n)$, we get $[a_{\{1, ... , l\}},I_{\alpha}]f \in \overline{{\mathcal M}}{}^s_t({\mathbb R}^n)$. 
Hence, we finally obtain
\[
	[a_{\{1, ... , l\}},I_{\alpha}]f \in \widetilde{\mathcal M}^{s}_t({\mathbb R}^n).
\] 
%The compactness of $[a_{\{1, ... , l\}},I_{\alpha}]$ immediately follows from the natural embedding $\widetilde{\mathcal M}^{s}_t({\mathbb R}^n) \hookrightarrow {\mathcal M}^{s}_t({\mathbb R}^n)$.

\noindent

\end{document}